\newcommand{\rt}{\rightarrow}
\newcommand{\lrt}{\longrightarrow}
\newcommand{\st}{\stackrel}
\newcommand{\CE}{\mathcal{E}}
\newcommand{\CI}{\mathcal{I} }
\newcommand{\CM}{\mathcal{M} }
\newcommand{\CQ}{\mathcal{Q} }
\newcommand{\CY}{\mathcal{Y} }
\newcommand{\CV}{\mathcal{V}}
\newcommand{\BE}{\mathbf{E}}
\newcommand{\BP}{\mathbf{P}}
\newcommand{\Bs}{\mathbf{s}}
\newcommand{\Bt}{\mathbf{t}}
\newcommand{\N}{\mathbb{N} }
\newcommand{\Proj}{{\rm Prj} }
\newcommand{\Inj}{{\rm Inj}}
\newcommand{\RMod}{R \text{-} {\rm{Mod}}}
\newcommand{\RQIMod}{R{\CQ / \langle I \rangle} \text{-} {\rm{Mod}}}
\newcommand{\Rep}{{\rm Rep}}
\newcommand{\fd}{{\rm{fd}}}
\newcommand{\Coker}{{\rm{Coker}}}
\newcommand{\Ker}{{\rm{Ker}}}
\newtheorem{theorem}{Theorem}[section]
\newtheorem{corollary}[theorem]{Corollary}
\newtheorem{lemma}[theorem]{Lemma}
\newtheorem{proposition}[theorem]{Proposition}
\theoremstyle{definition}
\newtheorem{example}[theorem]{Example}
\newtheorem{remark}[theorem]{Remark}
\theoremstyle{plain}
\newtheorem{stheorem}{Theorem}[subsection]
\newtheorem{scorollary}[stheorem]{Corollary}
\theoremstyle{definition}
\numberwithin{equation}{section}
\begin{document}

\title[Minimal Injective Resolutions and Auslander-Gorenstein Property]{Minimal Injective Resolutions and Auslander-Gorenstein Property for Path Algebras}

\author[Asadollahi, Hafezi and Keshavarz]{J. Asadollahi, R. Hafezi and M. H. Keshavarz}

\address{Department of Mathematics, University of Isfahan, P.O.Box: 81746-73441, Isfahan, Iran and School of Mathematics, Institute for Research in Fundamental Science (IPM), P.O.Box: 19395-5746, Tehran, Iran}
\email{asadollahi@ipm.ir \ \ and  \ \ asadollahi@sci.ui.ac.ir}

\address{Department of Mathematics, University of Isfahan, P.O.Box: 81746-73441, Isfahan, Iran}
\email{keshavarz@sci.ui.ac.ir}

\address{School of Mathematics, Institute for Research in Fundamental Sciences (IPM), P.O.Box: 19395-5746, Tehran, Iran }
\email{hafezi@ipm.ir}

\subjclass[2010]{16G20, 16B50, 18A40, 18G05}

\keywords{Representations of quivers; Injective envelope; Auslander-Gorenstein property}

\thanks{This research was in part supported by a grant from IPM (No. 93130216)}

\begin{abstract}
Let $R$ be a ring and $\CQ$ be a finite and acyclic quiver. We present an explicit formula for the injective envelopes and projective precovers in the category $\Rep (\CQ ,R)$ of representations of $\CQ$ by left $R$-modules. We also extend our formula to all terms of the minimal injective resolution of $R\CQ$. Using such descriptions, we study the Auslander-Gorenstein property of path algebras. In particular, we prove that the path algebra $R\CQ$ is $k$-Gorenstein if and only if $\CQ=\overrightarrow{A_{n}}$ and $R$ is a $k$-Gorenstein ring, where $n$ is the number of vertices of $\CQ$.
\end{abstract}

\maketitle

\tableofcontents

\section{Introduction}\label{1}
The theory of representations of quivers was initiated with the purpose of classifying finite dimensional algebras of finite representation type. Gabriel in \cite{G1} and \cite{G2} gave an explicit construction of indecomposable modules over a finite dimensional algebra, and in his work, he found the connection between the Dynkin diagrams of semisimple Lie algebras and the representation theory of algebras. After this connection was found, many authors have started the study of this theory of representation of quivers.

The classical representation theory of quivers considers finite quivers and assume that the base ring is algebraically closed field and that all vector spaces involve are finite dimensional (cf. \cite{Le}). In the recent years there has been a growing interest in the study of representations of quivers over general rings (not just fields) and this paper contains some results
in this direction. In fact, it should be considered as a continuation of the projects initiated in \cite{EH} and continued in \cite{EOT, EE1, EEG, AEHS}, to develop new techniques to study these more general representations.

\vspace{2mm}
In the first part of this paper, we recall the left and right adjoints of evaluation functors and their descriptions. We then apply them to describe injective envelopes and projective precovers of representations of finite acyclic quivers. Moreover, we describe explicitly, the terms of the minimal injective resolution of path algebra $R\CQ$, whenever $\CQ$ is a finite and acyclic quiver. Based on these descriptions, among other results, we study Auslander-Gorenstein property of path algebras. To see some results on Auslander-Gorenstein property of algebras see e.g. \cite{FGR, AR1, AR2}.

Let us be more precise. Let $M$ be a representation of $ \CQ $ by left $R$-modules. In Theorem \ref{Main I}, we provide an explicit formula for $\BE(M)$, the injective envelope of $M$ and $\BP(M)$, a projective precover of $M$.  In Theorem \ref{Main II} we extend our formula to all terms of the minimal injective resolution of $R\CQ$.

Generalized Nakayama Conjecture, $\mathbf{GNC}$ for short, says that each indecomposable injective module is a summand of some terms in a minimal injective resolution of an artin algebra.
As a corollary, we show that if $\mathbf{GNC}$ is true for $R$, then $\mathbf{GNC}$ is true for $R\CQ$, where $\CQ$ is a finite and acyclic quiver, see Corollary \ref{GNC} below.

Auslander introduced the notion of $k$-Gorenstein algebras, see e.g. \cite{FGR}. Let $R$ be a two-sided Noetherian ring. $R$ is called $k$-Gorenstein if $\fd (I^i)\leq i$ for every $0 \leq i \leq k-1$, where $I^i$ denotes the $i$th term of the minimal injective resolution of $R$ considered as a left $R$-module. Note that the notion of a $k$-Gorenstein ring is left-right symmetric. In Section \ref{5}, we apply our formulas to show that if  $\CQ$ is a finite, connected and acyclic quiver with $n$ vertices, then the path algebra $R\CQ$ is a $k$-Gorenstein ring if and only if $\CQ=\overrightarrow{A_{n}}$ and $R$ is a $k$-Gorenstein ring, see Theorem \ref{Main III}.
Also as an interesting example, show that when we consider the category of representations of a quiver over a ring, not necessarily a field, we in fact are studying the category of representations of a quiver with relations over a field, see Example \ref{5.6}.

We also turn our attention to the tensor product of path algebras and show that if $ A=K\CQ $ and $ B= K\CQ'$ are path algebras with respect to finite, connected and acyclic quivers $\CQ$ and $ \CQ'$, where $ K $ is a field, then $A\otimes_{K} B$ is $k$-Gorenstein if and only if $\CQ$ and $\CQ'$ are linear quivers.

All rings considered in this paper are associative with identity. The letter $R$ will usually denote such ring. All modules are left unitary $R$-modules. $\RMod$ will denote the category of left $R$-modules.

\section{Preliminaries}\label{2}
In this section, for the convenience of the reader, we present definitions and results that will be used throughout the paper.

\s{\sc Quivers and their representations.}\label{2.1}
A quiver $\mathcal{Q}$ is a directed graph. It will be denoted by a quadruple $\mathcal{Q} = (V, E, s, t)$, where $V$ and $E$ are respectively the sets of vertices and arrows of $\mathcal{Q}$ and $ s, t : E \rightarrow V$ are two maps which associate to every arrow $ \alpha \in E $ its source $ s(\alpha) $ and its target $ t(\alpha) $, respectively.
We usually denote the quiver $\mathcal{Q} = (V, E, s, t)$ briefly by $\mathcal{Q} = (V, E)$ or even simply by $\mathcal{Q}$. A vertex $v \in V$ is called a sink if there is no arrow $ \alpha $ with $ s(\alpha)=v $. $v$ is called a source if there is no arrow $ \alpha $ with $ t(\alpha)=v $.
A quiver $\mathcal{Q}$ is said to be finite if both $V$ and $E$ are finite sets. Throughout the paper we assume that $\mathcal{Q}$ is a finite quiver.

A path of length $ l\geqslant 1 $ with source $a$ and target $b$ (from $a$ to $b$) is a sequence of arrows $ \alpha_{l} \cdots \alpha_{2}\alpha_{1}$, where $\alpha_{i}\in E$, for all $ 1\leq i \leq l $, and we have $ s(\alpha_{1}) = a, s(\alpha_{i})=t(\alpha_{i-1})$ and $ t(\alpha_{l})=b $. If $ p=\alpha_{l} \cdots \alpha_{2}\alpha_{1} $ is a path of $ \CQ $ we extend the notation and let $ \Bs(p)=s(\alpha_1) $ and $ \Bt(p)=t(\alpha_l) $.
A path of length $ l\geqslant 1 $ is called a cycle if its source and target coincide. $\mathcal{Q}$ is called acyclic if it contains no cycles.

As Enochs et al. in \cite{EOT}, we can exploit induction to build a partition for $ V, $ the set of vertices of acyclic quiver $ \CQ$. Put
$ V_0 = \{ v \in V : \nexists \ \alpha \in E {\rm \ such \ that} \ s(\alpha)=v \}. $
Suppose $ n\geq 0 $ and we have defined $ V_i $ for all $ i \leq n $. Let
$ V_{n+1}=  \{ v \in V\setminus \bigcup_{i=0}^{n}V_i : \nexists \  \alpha \in \CE_n {\rm \ such \ that} \ s(\alpha)=v \}, $
where $ \CE_n =E\setminus \{ \alpha : t(\alpha) \in \bigcup_{i=0}^{n}V_i \} $. Dually one can define
$ V'_0 = \{ v \in V : \nexists \ \alpha \in E {\rm \ such \ that} \ t(\alpha)=v \}. $
And, if  $ V'_i $ is defined for every $ i \leq n $, then put
$ V'_{n+1}=  \{ v \in V\setminus \bigcup_{i=0}^{n}V_i : \nexists \ \alpha \in \CE'_n {\rm \ such \ that} \ t(\alpha)=v \}, $
where $ \CE'_n =E \setminus \{ \alpha : s(\alpha) \in \bigcup_{i=0}^{n}V'_i \} $.

For a fixed vertex $ v \in V $, the set of all $ w \in V $ with an arrow $ v \lrt w $ will be denoted by $V_{ \Bs(v)} $. Also, the set of all $ w \in V $ with an arrow $ w  \lrt v  $ will be denoted by $V_{ \Bt(v)} $. Similarly, $ E_{\Bs(v)} $, resp. $ E_{\Bt(v)} $, denotes the set of all arrows with initial, resp. terminal, vertex $v$.

A quiver $\mathcal{Q}$ can be considered as a category whose objects are the vertices of $\mathcal{Q}$ and morphisms are all paths in $\mathcal{Q}$. Assume that $R$ is a ring.
A representation $X$ of  $\CQ$ by $R$-modules  is a covariant functor $X : \CQ \lrt \RMod$.
Such a representation is determined by giving a module $X_{v}$ to
each vertex $v$ of $\CQ$ and a homomorphism $X_{\alpha} : X_{v} \lrt X_{w}$ to each
arrow $\alpha : v \lrt w$ of $\CQ$. And so, if $ p=\alpha_{l} \cdots \alpha_{2}\alpha_{1} $ is a path of $ \CQ $, then $ X_{p}=X_{{\alpha}_{l}} \cdots X_{{\alpha}_{2}}X_{{\alpha}_{1}} $.
A morphism between two representations $X$
and $Y$ is a natural transformation. Thus the representations of a quiver $ \CQ$ by modules over a ring $R$ form
a category, denoted by $\Rep (\CQ,R)$ or $ (\CQ, \RMod)$. This is a Grothendieck category with enough projectives and injectives. It is known that the category $(\CQ,\RMod)$ is equivalent to the category of modules over
the path ring $R\CQ.$

\s{\sc Left and right path spaces.}\label{2.3}
By the (left) path space of $\CQ,$ we mean the quiver $P(\CQ)$ whose vertices are the paths $p$ of $\CQ$ and whose arrows are the pairs $(p, \alpha p) : p \lrt \alpha p,$ where $p$ is a path of $\CQ$ and $\alpha$ is an arrow of $\CQ$ such that $\alpha p$ is defined.
It is clear then that $P(\CQ)$ is a forest. If $v$ is a vertex of $\CQ$ we let $P(\CQ)_v$ denote the
subtree of $P(\CQ)$ containing all paths of $\CQ$ with initial vertex $v$. If $p$ and $q$ are paths of $\CQ$
such that $qp$ is defined, we extend the notation and let $(p, qp): p \lrt qp$ denote a path of $P(\CQ)$. If $v$ is a vertex of $\CQ$ we let $P(\CQ)_v$ denote the subquiver of $P(\CQ)$ containing all paths of $ R\CQ$ with initial vertex $v$. For simplicity, the set of vertices of $P(\CQ)_v$
will be denoted by $ \CY_{v} $. Note that there is also an obvious definition of a right path space of $\CQ$.

\s{\sc Evaluation functor and its adjoints.}\label{2.4}
Associated to $v$, there exists a functor $e^{v} : \Rep (\CQ, R) \rightarrow \RMod$, called the evaluation functor, which assigns to every representation $X$ of $ \CQ $ its module at vertex $v$, denoted $X_{v}$.
It is proved in \cite{EH, EGOP} that
$ e^{v} $ possesses a right and also a left adjoint $ e^{v}_{\rho} $
and $ e^{v}_{\lambda} $, respectively.
In fact, $ e^{v}_{\rho}: \RMod \rt \Rep (\CQ, R) $ is defined as follows:
let $M$ be an arbitrary module in $\RMod$.
Then $ e^{v}_{\rho}(M)_{w}=\bigoplus_{\CQ(w,v)} M $, where $\CQ(w, v)$ denotes the set of
paths starting in $w$ and terminating in $v$. The maps are natural projections.
The left adjoint of $ e^{v} $ is defined similarly:
for any $R$-module $M$, one defines $ {e^{v}_{\lambda}(M)}_{w}=\bigoplus_{\CQ(v, w)}M $. The maps are natural injections.

\begin{remark}
Based on the properties of these adjoints, one may deduce that for any projective module $ P \in\Proj (R)$, the representation $ e^{v}_{\lambda}(P)$ is a projective representation
of $\CQ$, that is, belongs to $\Proj(\Rep(Q, R))$.
In fact, the set
$$\{e^{v}_{\lambda}(P) : P \in \Proj(R)\ {\rm and} \ v \in V \},$$
is a set of projective generators for the category $\Rep(Q, R)$.
On the other hand, for any injective module $E \in \Inj(R)$, the representation $ e^{v}_{\rho}(E)$ is an injective representation of $\CQ$, that is, belongs to $\Inj(\Rep(Q, R))$.
In fact, the set
$$\{e^{v}_{\rho}(E) : E \in \Inj(R) \ {\rm and}\  v \in V \},$$
is a set of injective cogenerators for the category $\Rep(Q, R)$.
This, in particular, implies that every representation $M$ of $\CQ$ can be embedded in a direct sum of elements of this set. The proof of these facts can be found in \cite{EE1} and \cite{EEG}.
\end{remark}

\section{Minimal Injective Resolutions of Path Algebras}\label{4}
In the classical representation theory of algebras over a field, to compute injective envelope and projective cover, one can  use Lemma $ 3.2.2 $  in \cite{ASS}. Enochs et al. in \cite[Proposition 3.1]{EKP} described the injective envelopes of representations of the line quivers $\overrightarrow{A_n}$ for $n \geq 1$ over an arbitrary ring. 
In this section, we plan to give an explicit formula for the terms of the minimal injective resolutions of path algebras. Unless otherwise specified, all quivers are finite, connected and acyclic.

Our first theorem, describes injective envelopes and projective precovers in the category $\Rep (\CQ, R) $ of representations of $\CQ$ by left $R$-modules. This result also can be proved by modifying the argument used in the proof of Proposition 3.1 of \cite{EKP}, where they obtained descriptions for the injective envelopes and projective precovers of representations, while $\CQ = \overrightarrow{A_n}$ is the line quiver. We here present a new proof that is in line with the proof of the main theorem of this section. Throughout we use bold capital $\BE$ to show the injective envelope.

\begin{theorem}\label{Main I}
Let $\CQ$ be an acyclic  quiver and let $M$ be a representation of $\CQ$ by left $R$-modules.
For every $v\in V$, set $K_v := \Ker (M_v \lrt \bigoplus_{s(\alpha)=v}M_{t(\alpha)})$ and $C_v := \Coker (\bigoplus_{t(\alpha)=v}M_{s(\alpha)}\lrt M_v)$. Then  $\BE(M) = \bigoplus_{v \in V} e_{\rho}^{v}(E_{v}) $, where $ E_v = \BE(K_v) $. Also $\bigoplus_{v\in V} e_{\lambda}^{v}(P_{v})$ is a projective precover of $ M $, where $ P_v $ is a projective precover of $ C_{v} $.
\end{theorem}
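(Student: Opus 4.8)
The plan is to prove the two assertions by constructing explicit comparison maps out of the two adjunctions and then checking the relevant universal property; the genuinely delicate point will be the \emph{essentiality} of the injective embedding. For the injective envelope, note that each $E_v=\BE(K_v)$ is an injective $R$-module and $K_v\subseteq M_v$, so injectivity of $E_v$ lets me extend the inclusion $K_v\hookrightarrow E_v$ to a map $\psi_v\colon M_v\to E_v$ restricting to this inclusion on $K_v$. Through the adjunction $e^v\dashv e^v_\rho$ these assemble into a morphism $\phi\colon M\to I:=\bigoplus_{v\in V}e^v_\rho(E_v)$ whose component at a vertex $w$ is $m\mapsto\bigl(\psi_v(M_p m)\bigr)_{v,\,p\colon w\to v}$. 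Since $V$ is finite and each $e^v_\rho(E_v)$ is injective (Section~\ref{2.4}), $I$ is injective, so it suffices to show $\phi$ is an essential monomorphism; then $I\cong\BE(M)$.

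First I would check $\phi$ is mono by induction on $h(w)$, the length of the longest path of $\CQ$ starting at $w$ (finite by finiteness and acyclicity). If $\phi_w(m)=0$, the trivial-path coordinate gives $\psi_w(m)=0$, while for each arrow $\alpha\colon w\to w'$ the coordinates factoring through $\alpha$ say exactly $\phi_{w'}(M_\alpha m)=0$; the inductive hypothesis yields $M_\alpha m=0$ for all such $\alpha$, hence $m\in K_w$, where $\psi_w$ is the inclusion into $E_w$, forcing $m=0$. The harder step, and the main obstacle, is essentiality; it hinges on one clean observation: if $k\in K_v$ then $M_p k=0$ for every \emph{nontrivial} path $p$ out of $v$, so $\phi_v(k)$ is supported on the trivial-path copy of $E_v$, i.e. $\phi_v|_{K_v}$ is just the summand inclusion $K_v\hookrightarrow E_v\hookrightarrow I_v$. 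Given $0\neq x\in I_u$, I would choose a nonzero coordinate $x_{v_0,p_0}$ with $h(v_0)$ minimal and push forward along $p_0$; since $\bigl(I_{p_0}x\bigr)_{v,q}=x_{v,\,qp_0}$ and minimality kills every coordinate with $q$ nontrivial (those land at vertices of strictly smaller height), the element $y:=I_{p_0}(x)$ equals $x_{v_0,p_0}\neq0$ sitting purely in the $E_{v_0}$-summand. Essentiality of $K_{v_0}\hookrightarrow E_{v_0}$ gives $r\in R$ with $0\neq r\,x_{v_0,p_0}\in K_{v_0}$, and then $ry=\phi_{v_0}(r\,x_{v_0,p_0})$ is a nonzero element of $\phi(M)$ lying inside the subrepresentation generated by $x$. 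This proves $\phi$ essential, hence $\BE(M)=I$.

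For the projective precover I would run the dual construction. Using that $M_v\twoheadrightarrow C_v$ is the canonical surjection onto the cokernel and that $P_v$ is projective, I lift the precover $p_v\colon P_v\to C_v$ to $\theta_v\colon P_v\to M_v$, and via $e^v_\lambda\dashv e^v$ assemble $\pi\colon P:=\bigoplus_{v\in V}e^v_\lambda(P_v)\to M$, with $\pi_w\colon\bigoplus_{v}\,\bigoplus_{q\colon v\to w}P_v\to M_w$ given by $(c_{v,q})\mapsto\sum_{v,q}M_q\theta_v(c_{v,q})$. Here $P$ is projective. Since over $\RMod$ a projective precover is automatically epic (a free cover of $C_v$ factors through $p_v$, forcing $p_v$ onto), an induction on the length of the longest path \emph{into} $w$ shows $\im\pi_w=M_w$: the trivial-path term covers $M_w$ modulo $B_w:=\im\bigl(\bigoplus_{t(\alpha)=w}M_{s(\alpha)}\to M_w\bigr)$, while the inductive hypothesis applied to the coordinates indexed by arrows into $w$ covers $B_w$. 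Thus $\pi$ is an epimorphism with projective domain, and any morphism from a projective object lifts through an epimorphism, so $\pi$ is a projective precover. I would finally flag that this yields a precover rather than a cover precisely because modules over a general $R$ need not admit projective covers.
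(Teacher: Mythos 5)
Your proof is correct and follows essentially the same route as the paper's: the same comparison map $M \to \bigoplus_{v}e^{v}_{\rho}(E_v)$ built by extending $K_v \hookrightarrow E_v$ to $M_v$, the same sink-to-source induction for injectivity of the map, and essentiality ultimately reduced to that of $K_v$ in $E_v=\BE(K_v)$. The only differences are organizational: you check essentiality globally via a minimal-height coordinate pushed forward along $p_0$, whereas the paper does it summand-by-summand by identifying $e^{v}_{\rho}(E_v)$ as the injective envelope of the stalk representation of $K_v$; and you write out the projective-precover half that the paper dismisses as dual.
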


\begin{proof}
We only prove the first part, the proof of the second part is similar, or rather dual. To this end, we first prove that there is a monomorphism $ \psi : M \rt \bigoplus_{v \in V} e_{\rho}^{v}(E_{v}) $. Let $ w $ be a vertex and the map $ \varphi_{w}: M_w \rt E_w $ be the extension of  embedding $ K_w \hookrightarrow E_w$. If for every $ p  \in \CY_w $, we consider the composition map $ \varphi_{\Bt(p)} M_{p}:M_w\rt M_{\Bt(p)}\rt E_{\Bt(p)} $, then we get the induced map $ \psi_w=(\varphi_{\Bt(p)}M_{p})_{p \in \CY_w}:M_w \rt    ( \bigoplus_{v \in V} e_{\rho}^{v}(E_{v}))_{w} =\bigoplus_{v \in V} \bigoplus_{\CQ(w,v)} E_{v} = \bigoplus_{p\in \CY_w}E_{\Bt(p)} $.
Now, we can show that the map $ \psi =(\psi_{w})_{w \in V}: M \rt \bigoplus_{v \in V} e_{\rho}^{v}(E_{v}) $ is a morphism
Let $ \alpha:w_1 \rt  w_2$ be an arrow in $ \CQ $ and $ x \in M_{w_{1}} $. Clearly, $ \psi_{w_{2}}M_{\alpha}(x) = (\varphi_{\Bt(q)}M_{q\alpha}(x))_{q\in \CY_{w_{2}}}  $. On the other hand, $ ( \bigoplus_{v \in V} e_{\rho}^{v}(E_{v}))_{\alpha} \psi_{w_{1}} (x) = ( \bigoplus_{v \in V} e_{\rho}^{v}(E_{v}))_{\alpha} (\varphi_{\Bt(p)}M_{p}(x))_{p\in \CY_{w_{1}}} = (y_{q})_{q\in \CY_{w_{2}}}$, where

\[\begin{array}{ll} y_{q}= \left \{\begin{array}{lll} \varphi_{\Bt(p)}M_{p}(x) & {} \ \ \ \ \ p= q\alpha, \\  \\ 0  & {} \ \  \ \ {\rm otherwise.} \end{array} \right. \end{array}\]

Thus we have the following commutative diagram  for every $ \alpha:w_1\lrt w_2$   in $ \CQ $ and
therefore the map $ \psi =(\psi_{w})_{w \in V}: M \lrt \bigoplus_{v \in V} e_{\rho}^{v}(E_{v}) $ is a morphism.
\begin{center}
$\xymatrix{x \ar[rr]^{M_{\alpha}} \ar[d]^{\psi_{w_{1}}} &  &M_{\alpha}(x) \ar[d]^{\psi_{w_{2}}} \\
(\varphi_{\Bt(p)}M_{p}(x))_{p\in \CY_{w_{1}}}\ar[rr]^{ \bigoplus_{v \in V} (e_{\rho}^{v}(E_{v}))_{\alpha}} &  & \ \  (\varphi_{\Bt(q)}M_{q\alpha}(x))_{q\in \CY_{w_{2}}} }$
\end{center}

Now, we show that $\psi$ is in fact a monomorphism. Let $x \in \Ker (\psi_{w})$. Thus $\varphi_{\Bt(p)} (M_{p}(x))=0,$ for every $ p  \in \CY_w$. Assume that $\lbrace \mathcal{V}_0, \CV_1, ... ,\CV_m \rbrace$ is a partition of vertices of the subquiver $ P(\CQ)_w $, that is defined in \ref{2.3}. We show that $ M_{p}(x)=0 $ for every
$ p\in \CY_w $, where $ w \in V $. If $ p \in \CV_0 $, then the vertex $ \Bt(p) $ is a sink and so $ K_{\Bt(p)} = M_{\Bt(p)} $. Therefore $ M_{p}(x) \in K_{\Bt(p)}$ and then $ M_{p}(x)= \varphi_{\Bt(p)} (M_{p}(x))=0 $. Now assume that $ p \in \CV_1  $. Then for every arrow $ \alpha $ with initial vertex $ \Bt(p) $, $ \alpha p \in \CV_0$. Thus $ M_{\alpha}M_{p} (x)= 0$ and so $ M_{p}(x) \in K_{\Bt(p)}$. Therefore $ M_{p}(x)= \varphi_{\Bt(p)} (M_{p}(x))=0 $.
By applying this argument, after finite steps, we deduce that $ M_{\alpha} (x)=0$ for every arrow $ \alpha $ with initial vertex $ w $. Therefore $ x \in K_w $ and then $ x=\varphi_{w}(x)=0 $.

Since there is a monomorphism from $ M $ to $ \bigoplus_{v \in V} e_{\rho}^{v}(E_{v}) $, the proof of this part will be completed, if we show that $ \bigoplus_{v \in V} e_{\rho}^{v}(E_{v}) $ is an injective envelope of $ K= \bigoplus_{v\in V}s^{v}(K_v) $, where

\[\begin{array}{ll} s^v(K_v)_w= \left \{\begin{array}{lll} K_v & {\rm if} \ \ w=v , \\  \\ 0  & {\rm if}  \ \ w\neq v. \end{array} \right.\end{array}\]

To this end, it is enough to show that $ \BE(s^{v}(K_v)) =e^{v}_{\rho}(E_v)$. Note that there is a monomorphism  $ s^{v}(K_v) \lrt e^{v}_{\rho}(E_v)$.
To prove that we have an essential embedding, we need to show that $ s^{v}(K_v) \bigcap L \neq 0,$ for every nonzero subrepresentation $ L $ of $ e^{v}_{\rho}(E_v) $.
We first show that $ L_v\neq 0 $. For this, suppose to the contrary that $ L_v = 0 $. Let $ w $ be a vertex of $ \CQ $ such that $ w\neq v $ and $ L_w \neq 0 $. Also, assume that $ x = (x_{p})_{p \in \CQ(w,v)} $ is a nonzero element of $ L_{w} $. Then, there is a  path $ p \in \CQ(w,v) $ such that $ x_{p}\neq 0 $. This means that the following diagram
\[\xymatrix{L_w \ar[rr]^{L_{p}} \ar[d] & &  L_v =0 \ar[d] \\
\bigoplus_{\CQ(w,v)}E_v \ar[rr]^{(e^{v}_{\rho}(E_v))_p} & &  E_v }\]
is not commutative, which is a contradiction because $ L $ is a subrepresentation of $ e^{v}_{\rho}(E_v) $.
Therefore, $\ L_v \neq 0$.
Hence, since $ K_v $ is essential in $ E_v $, we have that $ L_v \bigcap K_v \neq 0$. So $ 0 \neq s^{v}(L_v \bigcap K_v ) \subseteq s^{v}(K_v) \bigcap L $ (Note that since $L_{t(\alpha)}=0  $, for every arrow $ \alpha $ with initial vertex $ v $, $s^{v}( L_v \bigcap K_v)$ is a subrepresentation of $ L $).
\end{proof}

As an immediate corollary, we have the following result.

\begin{corollary}
Let $ \CQ $ be a finite acyclic quiver. Every injective representation of $ \CQ $ by left $ R $-modules can be decompose into a coproduct of  representations of the form $ e^{v}_{\rho}(E) $, where $ v \in V $ and $ E \in \Inj (R).$
\end{corollary}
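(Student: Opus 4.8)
The plan is to derive this as a direct consequence of Theorem \ref{Main I}. The key observation is that an injective object is its own injective envelope: since $\Rep(\CQ, R)$ is a Grothendieck category, injective envelopes exist, and if $M$ is injective then the essential monomorphism $M \hookrightarrow \BE(M)$ splits (because $M$ is injective), so essentiality forces the complement to vanish and the map to be an isomorphism, giving $M \cong \BE(M)$.

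First I would apply Theorem \ref{Main I} to the injective representation $M$. That theorem expresses $\BE(M) = \bigoplus_{v\in V} e^{v}_{\rho}(E_v)$, where $E_v = \BE(K_v)$ and $K_v = \Ker(M_v \lrt \bigoplus_{s(\alpha)=v} M_{t(\alpha)})$. Combining this with the identification $M \cong \BE(M)$ from the previous paragraph immediately yields $M \cong \bigoplus_{v\in V} e^{v}_{\rho}(E_v)$.

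Finally I would check that each summand has the required shape. By definition $E_v = \BE(K_v)$ is the injective envelope of the $R$-module $K_v$ in $\RMod$, hence an injective $R$-module, so $E_v \in \Inj(R)$. Therefore $M$ decomposes as a coproduct of representations of the form $e^{v}_{\rho}(E)$ with $v\in V$ and $E \in \Inj(R)$, which is precisely the assertion.

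As for obstacles, there is essentially no technical difficulty here, since the substantive work is carried out in Theorem \ref{Main I}; the corollary is a clean specialization to the injective case. The only point requiring a moment's care is the identification $M \cong \BE(M)$ for injective $M$, which is a standard fact in any Grothendieck category. One could instead route the argument through the remark that the $e^{v}_{\rho}(E)$ form a set of injective cogenerators, but that approach would still rely on the explicit splitting furnished by Theorem \ref{Main I}, so the direct appeal to the theorem is the most economical.
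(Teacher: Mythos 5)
Your proposal is correct and follows exactly the paper's own argument: identify an injective representation with its injective envelope and read off the decomposition from Theorem \ref{Main I}. The paper's proof is a two-line version of the same reasoning, so nothing further is needed.
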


\begin{proof}
Any injective representation is its own injective envelope. Hence, by the above theorem every injective envelope has this form.
\end{proof}

We also need the following elementary lemma. Note that for an arrow $\alpha$ with initial vertex $w$, $\alpha \CQ(v,w) = \lbrace \alpha p \vert p \in \CQ(v,w) \rbrace$.

\begin{lemma}\label{4.1}
Let $ \CQ $  be a finite, connected and acyclic quiver. Also, assume that $  V_0 $ is the subset of $ V$ consisting of all sinks. For every $ v \in V $ and $ w \in V\setminus V_0 $  choose an arrow $ \alpha_{v,w} $ with initial vertex $ w $. Then
$ \Coker ((e^{v}_{\lambda}(R))_{w} \lrt \bigoplus_{s(\alpha)=w}(e^{v}_{\lambda}(R))_{t(\alpha)}) = \bigoplus_{\CQ''(v,w)}R $, where $ \CQ''(v,w) =  \CQ'(v,w) \setminus \alpha_{v,w} \CQ(v,w)$ and  $ \CQ' (v,w)= \bigcup_{s(\alpha)=w} \CQ(v,t(\alpha)) $.
\end{lemma}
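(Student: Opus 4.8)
The plan is to reduce everything to a free-module computation over $R$ and to show that the chosen arrows $\alpha_{v,w}$ pick out a splitting of the relevant map. First I would recall from \ref{2.4} that $(e^{v}_{\lambda}(R))_{w} = \bigoplus_{\CQ(v,w)} R$ is free on the set $\CQ(v,w)$ of paths from $v$ to $w$, with basis vectors I will write as $e_{p}$ for $p \in \CQ(v,w)$, and that for an arrow $\alpha$ with $s(\alpha)=w$ the structure map $(e^{v}_{\lambda}(R))_{\alpha}$ is the natural injection determined by $e_{p}\mapsto e_{\alpha p}$, where $\alpha p \in \CQ(v,t(\alpha))$. Writing the target as a free module on the disjoint union $\CQ'(v,w)=\bigsqcup_{s(\alpha)=w}\CQ(v,t(\alpha))$, whose elements I regard as pairs $(\alpha,q)$ with $s(\alpha)=w$ and $q\in\CQ(v,t(\alpha))$, the map in question becomes
$$\phi\colon \bigoplus_{\CQ(v,w)}R \lrt \bigoplus_{\CQ'(v,w)}R, \qquad \phi(e_{p})=\sum_{s(\alpha)=w} e_{(\alpha,\,\alpha p)}.$$

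The heart of the argument is to show that $\bigoplus_{\CQ'(v,w)}R = \im(\phi) \oplus \bigoplus_{\CQ''(v,w)}R$, where $\bigoplus_{\CQ''(v,w)}R$ denotes the span of the basis vectors indexed by $\CQ''(v,w)=\CQ'(v,w)\setminus \alpha_{v,w}\CQ(v,w)$, that is, all $(\alpha,q)$ other than the pairs $(\alpha_{v,w},\alpha_{v,w}p)$ for $p\in\CQ(v,w)$. For the spanning part I would observe that each ``removed'' basis vector can be rewritten as
$$e_{(\alpha_{v,w},\,\alpha_{v,w}p)} = \phi(e_{p}) - \sum_{\substack{s(\alpha)=w\\ \alpha\neq\alpha_{v,w}}} e_{(\alpha,\,\alpha p)},$$
and every term on the right lies in $\im(\phi)+\bigoplus_{\CQ''(v,w)}R$. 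For directness, note that in $\phi(\sum_{p} c_{p}e_{p})=\sum_{p}c_{p}\,\phi(e_{p})$ the coefficient on the removed vector $e_{(\alpha_{v,w},\alpha_{v,w}p)}$ is exactly $c_{p}$; hence if this image element lies in $\bigoplus_{\CQ''(v,w)}R$ (no removed components) then every $c_{p}=0$, so $\im(\phi)\cap\bigoplus_{\CQ''(v,w)}R=0$. Passing to the quotient then yields $\Coker(\phi)\cong\bigoplus_{\CQ''(v,w)}R$, as claimed.

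The one point that needs genuine care — and where acyclicity enters — is the bookkeeping of indices. Because $w\in V\setminus V_{0}$ is not a sink, there is at least one arrow out of $w$ and the distinguished arrow $\alpha_{v,w}$ exists. Since $\CQ$ is acyclic, for each fixed arrow $\alpha$ the assignment $p\mapsto \alpha p$ is injective from $\CQ(v,w)$ into $\CQ(v,t(\alpha))$, so the removed pairs $(\alpha_{v,w},\alpha_{v,w}p)$ are pairwise distinct and each $\phi(e_{p})$ meets the removed locus in exactly one coordinate; this is precisely what makes the coefficient extraction valid and the sum direct. I expect the main obstacle to be purely notational: setting up the disjoint-union description of $\CQ'(v,w)$ so that parallel arrows, and distinct arrows sharing a common target, are kept as separate summands — as they must be in $\bigoplus_{s(\alpha)=w}(e^{v}_{\lambda}(R))_{t(\alpha)}$ — rather than being collapsed at the level of vertices. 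The degenerate case $\CQ(v,w)=\varnothing$ is automatic, since then $\phi$ has trivial domain and $\CQ''(v,w)=\CQ'(v,w)$.
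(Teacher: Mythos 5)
Your proposal is correct and follows essentially the same route as the paper: both identify $(e^{v}_{\lambda}(R))_{w}\to\bigoplus_{s(\alpha)=w}(e^{v}_{\lambda}(R))_{t(\alpha)}$ with the diagonal embedding of the free module on $\CQ(v,w)$ into the free module on the disjoint union $\CQ'(v,w)$, and use the distinguished arrow $\alpha_{v,w}$ to split off a complement spanned by the basis vectors indexed by $\CQ''(v,w)$ (the paper packages this as an explicit short exact sequence with maps $f$ and $g$, you as an internal direct-sum decomposition, which is the same computation). Your only inessential slip is attributing the injectivity of $p\mapsto\alpha p$ to acyclicity; it holds in any quiver since appending a fixed arrow to distinct paths yields distinct paths.
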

\begin{proof}
As before, let $E_{\Bs(w)}$ denote the set of all arrows with initial vertex $w$. For every $\alpha \in E_{\Bs(w)}$,
we denote an element of $ \bigoplus_{\CQ(v,t(\alpha))}R $ by the pair
$ (z'''_\alpha , z_\alpha) $,
where
$ z_\alpha \in \bigoplus_{ \alpha \CQ(v, w)}R $, $ z'''_\alpha \in \bigoplus_{\CQ'''(v, t(\alpha))}R $
and
$ \CQ'''(v, t(\alpha)) = \CQ(v, t(\alpha)) \setminus \alpha \CQ(v, w) $.
Assume that the maps $ f: \bigoplus_{\CQ(v,w)} R \rt \bigoplus_{ \CQ'(v,w)} R $ and $ g: \bigoplus_{ \CQ'(v,w)} R \rt \bigoplus_{\CQ''(v,w)}R $
are defined by
$ f: (r_p)_{p \in \CQ(v, w)} \longmapsto (0 ,(r_p)_{\alpha p \in \alpha\CQ(v,w)} )_{s(\alpha)=w} $
and
$ g: (z'''_\alpha, z_\alpha)_{s(\alpha)=w} \longmapsto (z'''_\alpha, z_\alpha - z_{\alpha_{v,w}})_{s(\alpha)=w} $.
Now, the proof can be completed in view of the following short exact sequence
$$ 0 \lrt \bigoplus_{\CQ(v,w)} R \st {f}{\lrt} \bigoplus_{ \CQ'(v,w)} R \st{g}{\lrt} \bigoplus_{\CQ''(v,w)}R \lrt 0.$$
\end{proof}

\begin{proposition}\label{4.2}
Let $ \CQ $ be a finite, connected and acyclic quiver. If
$  0 \longrightarrow R \stackrel{}{\longrightarrow} I^{0}\stackrel{}{\longrightarrow} I^{1}\stackrel{}{\longrightarrow} I^{2} \longrightarrow \cdots $
is the minimal injective resolution of $R$, then, for every $ v \in V, $
$ 0 \lrt e^{v}_{\lambda}(R) \st{}{\lrt}  J^{v,o}\st{}{ \lrt}  J^{v,1} \st{}{\lrt}  J^{v,2} \lrt  \cdots $ is  the minimal injective resolution of $e^{v}_{\lambda}(R)$,
where for every $ i \geq 0, $  $ J^{v,i}=(\bigoplus_{x\in V_{0}}e^{x}_{\rho}(\bigoplus_{\CQ(v,x)} I^{i})) \bigoplus (\bigoplus_{y\in V\setminus V_{0}}e^{y}_{\rho}(\bigoplus_{\CQ''(v,y)}I^{i-1})) $ and $ I^{-1} = 0$.
\end{proposition}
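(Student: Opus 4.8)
The plan is to reduce the computation to a single short exact sequence together with the standard mapping-cone construction of an injective resolution, so that Lemma \ref{4.1} is invoked only once and minimality becomes transparent. The two facts I would establish first are: (i) the functor $e^{x}_{\rho}$ carries the minimal injective resolution $0 \rt A \rt I^{\bullet}_{A}$ of any $R$-module $A$ to the minimal injective resolution $0 \rt e^{x}_{\rho}(A) \rt e^{x}_{\rho}(I^{\bullet}_{A})$; and (ii) for injective resolutions $0 \rt Y \rt J_{Y}^{\bullet}$ and $0 \rt Z \rt J_{Z}^{\bullet}$ attached to a short exact sequence $0 \rt X \rt Y \rt Z \rt 0$, the shifted cone of a lift $J_{Y}^{\bullet} \rt J_{Z}^{\bullet}$ is an injective resolution of $X$ with $i$-th term $J_{Y}^{i}\oplus J_{Z}^{i-1}$. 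Fact (i) follows readily from Theorem \ref{Main I}: since $e^{x}_{\rho}$ is exact and preserves injectives, one only needs $\BE(e^{x}_{\rho}(A)) = e^{x}_{\rho}(\BE(A))$, and Theorem \ref{Main I} yields this because $\Ker(e^{x}_{\rho}(A)_{w} \rt \bigoplus_{s(\alpha)=w}e^{x}_{\rho}(A)_{t(\alpha)})$ equals $A$ for $w=x$ and vanishes otherwise (here $\CQ$ finite and acyclic makes every path set finite, so all these direct sums are injective).

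Next I would produce the short exact sequence. Set $G^{v} := \bigoplus_{x \in V_{0}}e^{x}_{\rho}(\bigoplus_{\CQ(v,x)}R)$ and let $\bar{\psi}: e^{v}_{\lambda}(R) \rt G^{v}$ be the canonical morphism, namely the map $\psi$ of Theorem \ref{Main I} before passing to injective envelopes (so the envelopes $E_{x}=\BE(K_{x})$ are replaced by $K_{x}=e^{v}_{\lambda}(R)_{x}$ at sinks). A check on each vertex, using that from any vertex there is a path to a sink, shows $\bar{\psi}$ is a monomorphism; put $W := \Coker(\bar{\psi})$, so that $0 \rt e^{v}_{\lambda}(R) \rt G^{v} \rt W \rt 0$ is exact. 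By fact (i) the minimal injective resolution of $G^{v}$ is $\bigoplus_{x \in V_{0}}e^{x}_{\rho}(\bigoplus_{\CQ(v,x)}I^{\bullet})$, which is exactly the sink summand of $J^{v,\bullet}$.

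The heart of the proof is the identification $W \cong \bigoplus_{y \in V \setminus V_{0}}e^{y}_{\rho}(\bigoplus_{\CQ''(v,y)}R)$. For this I would use two features of the sink-supported representation $G^{v}$: its canonical map $\bar\psi$ is an isomorphism at every sink, so $W$ is supported on $V\setminus V_{0}$; and its structure maps are isomorphisms at every non-sink vertex, the relevant path sets being in bijection. Consequently the snake lemma, applied to the morphism of short exact sequences given by the structure maps, collapses at each non-sink $y$ to $\Ker(W_{y} \rt \bigoplus_{s(\alpha)=y}W_{t(\alpha)}) \cong \Coker(e^{v}_{\lambda}(R)_{y} \rt \bigoplus_{s(\alpha)=y}e^{v}_{\lambda}(R)_{t(\alpha)})$, which Lemma \ref{4.1} evaluates as $\bigoplus_{\CQ''(v,y)}R$. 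Upgrading this computation of kernels to a genuine isomorphism of representations pins down $W$; granting it, fact (i) gives the minimal injective resolution of $W$ as $\bigoplus_{y \in V \setminus V_{0}}e^{y}_{\rho}(\bigoplus_{\CQ''(v,y)}I^{\bullet})$, the non-sink summand of $J^{v,\bullet}$ shifted by one. Feeding the two resolutions into fact (ii) then yields an injective resolution of $e^{v}_{\lambda}(R)$ whose $i$-th term is precisely $(\bigoplus_{x \in V_{0}}e^{x}_{\rho}(\bigoplus_{\CQ(v,x)}I^{i})) \oplus (\bigoplus_{y \in V\setminus V_{0}}e^{y}_{\rho}(\bigoplus_{\CQ''(v,y)}I^{i-1})) = J^{v,i}$, with $I^{-1}=0$. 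Minimality is automatic: the cone differential is assembled from the radical differentials of the two minimal resolutions together with a connecting map $\bigoplus_{x}e^{x}_{\rho}(-) \rt \bigoplus_{y}e^{y}_{\rho}(-)$ from sink-supported to non-sink-supported injectives, which is radical because such summands share no isomorphic indecomposable direct summands.

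The step I expect to be the main obstacle is exactly this last identification: passing from the kernel computation supplied by Lemma \ref{4.1} to an honest isomorphism $W \cong \bigoplus_{y}e^{y}_{\rho}(\bigoplus_{\CQ''(v,y)}R)$ of representations. This requires controlling the structure maps of $W$ and the coherence of the combinatorial data $\CQ'(v,y)$, $\CQ''(v,y)$ and the chosen arrows $\alpha_{v,y}$ across all non-sink vertices simultaneously, rather than one vertex at a time. Everything else—exactness, injectivity of the terms, the degree shift, and minimality—is then formal.
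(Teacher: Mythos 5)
Your overall architecture (one global short exact sequence plus a mapping cone) is genuinely different from the paper's proof, and your fact (i) is correct and well justified via Theorem \ref{Main I}. However, the proposal has two real gaps. The first is the one you flag yourself: the isomorphism of representations $W \cong \bigoplus_{y \in V\setminus V_{0}} e^{y}_{\rho}(\bigoplus_{\CQ''(v,y)}R)$ is asserted, not proved, and it is strictly stronger than what Lemma \ref{4.1} delivers. Lemma \ref{4.1} computes, for each non-sink $w$ separately, a cokernel of $R$-modules; it says nothing about the structure maps of $W$ or about $W$ decomposing as a direct sum of representations of the form $e^{y}_{\rho}(-)$, and the choices $\alpha_{v,y}$ made independently at each vertex must be shown to assemble coherently. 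Since your entire computation of the terms $J^{v,i}$ for $i\geq 1$ funnels through this isomorphism, the proof is incomplete at its central step. The paper deliberately avoids this: it never identifies any cosyzygy $K^{v,i}$ globally, but only computes the vertex-wise kernels $\Ker(K^{v,i}_{w} \rt \bigoplus_{s(\alpha)=w} K^{v,i}_{t(\alpha)})$ inductively by the Snake lemma, which is exactly the input Theorem \ref{Main I} needs to produce $\BE(K^{v,i})$.

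The second gap is the claim that ``minimality is automatic.'' The shifted cone of a lift between the minimal injective resolutions of $Y$ and $Z$ is in general \emph{not} a minimal injective resolution of $X$: already for a split sequence $0 \rt X \rt X\oplus Z \rt Z \rt 0$ the cone acquires a contractible summand $J_{Z}^{i}\oplus J_{Z}^{i-1}$. So minimality here needs a genuine argument, and the one you sketch (the connecting map is ``radical'' because the sink-supported and non-sink-supported injectives ``share no isomorphic indecomposable direct summands'') is not available in this setting: $R$ is an arbitrary ring, injective modules need not admit indecomposable decompositions, and minimality of an injective resolution is the statement that $\Ker(J^{i}\rt J^{i+1})$ is essential in $J^{i}$, not a radical condition. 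Moreover $\Hom(e^{x}_{\rho}(E), e^{y}_{\rho}(E')) \cong \Hom_{R}(\bigoplus_{\CQ(y,x)}E, E')$ can certainly be nonzero for $x$ a sink and $y$ not, so the connecting map is not trivially negligible. The natural repair is to verify essentiality degree by degree through Theorem \ref{Main I}, i.e., to show that each cosyzygy of your cone has the prescribed vertex-wise kernels and hence the cone term is its injective envelope --- but that is precisely the paper's inductive argument, at which point the mapping-cone packaging buys nothing.
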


\begin{proof}
Let $ \alpha : w \lrt w' $ be an arrow in $ \CQ $.  Set $ P^v:=e^{v}_{\lambda}(R). $ The natural morphism $ P^{v}_{\alpha}:P^{v}_{w} = \bigoplus_{\CQ(v,w)}R \lrt P^{v}_{w'}=\bigoplus_{\CQ(v, w')} R$
is a momomorphism. Hence, for every vertex $ w \in V\setminus V_0 $, the morphism $ P^{v}_{w}\lrt \bigoplus_{s(\alpha)=w} P^{v}_{t(\alpha)} $ has zero kernel.
Therefore, by Theorem \ref{Main I}, $ J^{v,0}=\BE(P^v)= \bigoplus_{x\in V}e^{x}_{\rho}(E^{v}_{x})=\bigoplus_{x\in V_0}e^{x}_{\rho}(E^{v}_{x}) = \bigoplus_{x\in V_{0}}e^{x}_{\rho}(\bigoplus_{\CQ(v,x)} I^{0})$. Note that  for every $x\in V_{0}$, $ E^{v}_{x}=\BE(P^{v}_{x})= \BE (\bigoplus_{\CQ(v,x)}R)=\bigoplus_{\CQ(v,x)}I^{0} $.

Now assume that $ K^{v,1} $ is the cokernel of the natural morphism
$ P^{v}\lrt J^{v,0} $. Let us first compute the kernel of the natural morphism
$ h^{v,1}_{w}:K^{v,1}_{w}\lrt \bigoplus_{s(\alpha)=w}K^{v,1}_{t(\alpha)} $
for every $ w\in V\setminus V_{0} $.
To this end, consider the following commutative diagram with exact rows:
\[  \xymatrix@C-0.4pc@R-0.10pc{0 \ar[r]& P^{v}_{w} \ar[r] \ar[d]^{h^{v,0}_{w}} & J^{v,0}_{w} \ar[r] \ar[d]^{g^{v,0}_{w}} & K^{v,1}_{w}  \ar[d]^{h^{v,1}_{w}} \ar[r] & 0\\
0 \ar[r] &\bigoplus_{s(\alpha)=w}P^{v}_{t(\alpha)} \ar[r] & \bigoplus_{s(\alpha)=w}J^{v,0}_{t(\alpha)} \ar[r]  & \bigoplus_{s(\alpha)=w}K^{v,1}_{t(\alpha)} \ar[r] & 0.}\]

We know that for every path $ p $ from $ w $ to a sink $ x $, there is an arrow $ \alpha_p $ as a part of $ p $ with initial vertex $ w $; i.e., $ p=p'\alpha_p $ for some path $ p' $ from $ t(\alpha_p) $ to $ x $. Thus if $z= (z_p)_{p \in (\bigcup_{x\in V_0} \CQ(w,x))} $ is an element of $\Ker(g^{v,0}_{w})$, then $ J^{v,0}_{\alpha_p}(z) = 0,$ for every $ p \in \bigcup_{x\in V_0} \CQ(w,x) $ and so $ z_p=(J^{v,0}_{\alpha_{p}}(z))_{p'} =0$.
Therefore, $ g^{v,0}_{w} $ has zero kernel.
Also, by \cite[Proposition 2.1]{EEG} this map has zero cokernel.
Hence, by the Snake lemma and Lemma \ref{4.1}, we have that
$ \Ker ( h^{v,1}_{w})\cong \Coker (h^{v,0}_{w}) \cong \bigoplus_{\CQ''(v,w)}R $.
Thus,
$ J^{v,1}=\BE(K^{v,1})=(\bigoplus_{x\in V_{0}}e^{x}_{\rho}(\bigoplus_{\CQ(v,x)} I^{1})) \bigoplus (\bigoplus_{y\in V\setminus V_{0}}e^{y}_{\rho}(\bigoplus_{\CQ''(v,y)}I^{0}))  $.

Assume that  $ J^{v,i}=(\bigoplus_{x\in V_{0}}e^{x}_{\rho}(\bigoplus_{\CQ(v,x)} I^{i})) \bigoplus (\bigoplus_{y\in V\setminus V_{0}}e^{y}_{\rho}(\bigoplus_{\CQ''(v,y)}I^{i-1})) $, $ K^{v,i+1} $ is the cokernel of the embedding $ K^{v,i} \rt J^{v,i} $ and $ \Ker (h^{v,i}: K^{v,i}_{w}\rt \bigoplus_{s(\alpha)=w}K^{v,i}_{t(\alpha)})= \bigoplus_{\CQ''(v,w)}K^{i-1} $ for every $ w \in V \setminus V_0 $, where $ i \geq 1 $ and $ K^{i-1} = \Ker (I^{i-1}\rt I^{i}) $.
Consider the following commutative diagram with the exact rows:

\[  \xymatrix@C-0.5pc@R-0.10pc{0 \ar[r]& K^{v,i}_{w} \ar[r]^{\psi^{v,i}_{w}} \ar[d]^{h^{v,i}_{w}} & J^{v,i}_{w} \ar[r] \ar[d]^{g^{v,i}_{w}} & K^{v,i+1}_{w}  \ar[d]^{h^{v,i+1}_{w}} \ar[r] & 0\\
0 \ar[r] &\bigoplus_{s(\alpha)=w}K^{v,i}_{t(\alpha)} \ar[r] & \bigoplus_{s(\alpha)=w}J^{v,i}_{t(\alpha)} \ar[r]  & \bigoplus_{s(\alpha)=w}K^{v,i+1}_{t(\alpha)} \ar[r] & 0.}\]

The map $ h^{v,i}_{w} $ is onto. Thus, by the Snake lemma, we have the following short exact sequence
$$ 0 \lrt \Ker ( h^{v,i}_{w}) \st{\overline{{\psi}^{v,i}_{w}}}{\lrt} \Ker ( g^{v,i}_{w}) \lrt \Ker ( h^{v,i+1}_{w}) \lrt 0. $$

One can easily see that $ \Ker ( g^{v,i}_{w})=\bigoplus_{\CQ''(v,w)}I^{i-1} $.
On the other hand, by the proof of the first part of Theorem \ref{Main I}, $\overline{\psi^{v,i}_{w}}(x)=x$, for every $ x \in \Ker (h^{v,i}_{w})=\bigoplus_{\CQ''(v,w)}K^{i} $.
Therefore, $ \Ker ( h^{v,i+1}_{w})\cong \Ker ( g^{v,i}_{w})/ \Ker( h^{v,i}_{w})\cong \bigoplus_{\CQ''(v,w)}K^{i} $, where $ K^i = \Coker ( d^{i-1}) $.
Hence,
$ \BE (\Ker ( h^{v,i+1}_{w}))= \bigoplus_{\CQ''(v,w)}I^{i} $ and $$ J^{v,i+1}=\BE(K^{v,i+1})=(\bigoplus_{x\in V_{0}}e^{x}_{\rho}(\bigoplus_{\CQ(v,x)} I^{i+1})) \bigoplus (\bigoplus_{y\in V\setminus V_{0}}e^{y}_{\rho}(\bigoplus_{\CQ''(v,y)}I^{i})).$$

The proof can be now completed by induction.
\end{proof}

\begin{theorem}\label{Main II}
Let $ A=R\CQ $ be the path algebra of a finite and acyclic quiver $ \CQ $. If
$ 0 \longrightarrow R \stackrel{}{\longrightarrow} I^{0}\stackrel{}{\longrightarrow} I^{1}\stackrel{}{\longrightarrow} I^{2} \longrightarrow \cdots $
is the minimal injective resolution of $R$, then the minimal injective resolution of $A$ becomes
$ 0 \lrt A \st{}{\lrt} \bigoplus_{v\in V} J^{v,o}\st{}{ \lrt} \bigoplus_{v \in V} J^{v,1} \st{}{\rt} \bigoplus_{v\in V} J^{v,2} \rt  \cdots, $
where $ J^{v,i}=(\bigoplus_{x\in V_{0}}e^{x}_{\rho}(\bigoplus_{\CQ(v,x)} I^{i})) \bigoplus (\bigoplus_{y\in V\setminus V_{0}}e^{y}_{\rho}(\bigoplus_{\CQ''(v,y)}I^{i-1})), $ for every $ i \geq 0 $ and $ I^{-1} = 0$.
\end{theorem}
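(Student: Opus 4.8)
The plan is to deduce Theorem \ref{Main II} from Proposition \ref{4.2} by decomposing the regular module and then observing that minimal injective resolutions respect finite direct sums. First I would record the standard decomposition of the regular representation. Under the equivalence between $\Rep(\CQ, R)$ and the category of left $R\CQ$-modules recalled in \ref{2.1}, the left $A$-module $A = R\CQ$ corresponds to $\bigoplus_{v\in V} e^v_{\lambda}(R)$. Indeed, writing $e_v$ for the trivial path at $v$, the decomposition $1 = \sum_{v\in V} e_v$ into orthogonal idempotents gives $A = \bigoplus_{v\in V} Ae_v$ as left $A$-modules, and the representation attached to $Ae_v$ sends a vertex $w$ to the free $R$-module $e_w A e_v = \bigoplus_{\CQ(v,w)} R$ on the paths from $v$ to $w$, with structure maps induced by composition with arrows. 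This is exactly the functorial description of $e^v_{\lambda}(R)$ given in \ref{2.4}. Since $\CQ$ is finite, $V$ is finite, so this is a \emph{finite} direct sum.

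Next I would argue that forming a minimal injective resolution commutes with finite direct sums. The key fact is that the injective envelope of a finite direct sum is the direct sum of the injective envelopes, $\BE(N_1 \oplus N_2) = \BE(N_1) \oplus \BE(N_2)$, which holds in any Grothendieck category because a finite direct sum of injectives is injective and a finite direct sum of essential extensions is essential. Granting this, if $0 \to N_j \to I_j^{\bullet}$ is the minimal injective resolution of $N_j$ for $j = 1, 2$, then the cokernel of $N_1 \oplus N_2 \to I_1^{0} \oplus I_2^{0}$ is $\Coker(N_1 \to I_1^{0}) \oplus \Coker(N_2 \to I_2^{0})$; hence by induction on the homological degree the termwise sum $0 \to N_1 \oplus N_2 \to I_1^{\bullet} \oplus I_2^{\bullet}$ is again a resolution in which every term is an injective envelope of the preceding cokernel, and is therefore minimal. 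Iterating over the finitely many vertices yields the same conclusion for $\bigoplus_{v\in V} e^v_{\lambda}(R)$.

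Finally I would invoke Proposition \ref{4.2}: for each $v \in V$ the minimal injective resolution of $e^v_{\lambda}(R)$ is $0 \to e^v_{\lambda}(R) \to J^{v,0} \to J^{v,1} \to \cdots$ with the stated terms $J^{v,i}$. Taking the direct sum over $v \in V$ and combining the two observations above, the minimal injective resolution of $A \cong \bigoplus_{v\in V} e^v_{\lambda}(R)$ is $0 \to A \to \bigoplus_{v\in V} J^{v,0} \to \bigoplus_{v\in V} J^{v,1} \to \cdots$, which is precisely the asserted formula.

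I expect the only point requiring genuine care, and hence the main obstacle, to be the first step: correctly pinning down the identification $A \cong \bigoplus_{v\in V} e^v_{\lambda}(R)$, in particular matching the left-module structure of $R\CQ$ with the functorial description of $e^v_{\lambda}$ and verifying that the structure maps agree under the chosen orientation conventions. Everything after that is formal, since the actual computation of the terms $J^{v,i}$ has already been carried out in Proposition \ref{4.2}.
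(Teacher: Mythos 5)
Your proposal is correct and follows the same route as the paper: the paper's proof simply states that $A=\bigoplus_{v\in V}e^{v}_{\lambda}(R)$ and that the minimal injective resolution of $A$ is therefore the coproduct of those of the $e^{v}_{\lambda}(R)$ from Proposition \ref{4.2}. You merely supply the details (the idempotent decomposition and the compatibility of injective envelopes with finite direct sums) that the paper leaves implicit.
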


\begin{proof}
We know that $ A=\bigoplus_{v\in V}e^{v}_{\lambda}(R) $. Thus the minimal injective resolution of $ A $ is the coproduct of the minimal injective resolutions of $ e^{v}_{\lambda}(R) $.
\end{proof}

\begin{example}\label{ex}
Let $0 \longrightarrow R \longrightarrow I^{0}\longrightarrow I^{1}\longrightarrow I^{2} \longrightarrow \cdots$ be a minimal injective resolution of $ R $ as a left $ R $-module and  $ \CQ $ be the line quiver $ \overrightarrow{A_n} $ with $ n \geq 1 $. In view of Proposition \ref{4.2}, we see that for every $ 1< i \leq n $,
$ 0 \lrt e^{i}_{\lambda}(R) \lrt e^{n}_{\rho}(I^0) \lrt e^{n}_{\rho}(I^1) \bigoplus e^{i-1}_{\rho}(I^0) \lrt e^{n}_{\rho}(I^2) \bigoplus e^{i-1}_{\rho}(I^1) \lrt \cdots $
is a minimal injective resolution of $ e^{i}_{\lambda}(R) $. Also $ 0 \lrt e^{1}_{\lambda}(R) \lrt e^{n}_{\rho}(I^0) \lrt e^{n}_{\rho}(I^1)  \lrt e^{n}_{\rho}(I^2)  \lrt \cdots $ is a minimal injective resolution of $ e^{1}_{\lambda}(R) $.
This, in particular, gives a minimal injective resolution for the lower triangular matrix ring of degree $n$ over $R$.
\end{example}

\begin{remark}\label{4.4}
Let $M$ be a representation of a quiver $\CQ$. We know
that there exists an exact sequence
$$ 0 \lrt \bigoplus_{\alpha\in E}e^{t(\alpha)}_{\lambda}(M_{s(\alpha)})\lrt \bigoplus_{v \in V}e^{v}_{\lambda}(M_v)\lrt M \lrt 0$$
in $\Rep (Q,R)$, see  \cite[Corollary 28.3]{Mi2} and \cite[The Standard Resolution]{C}.
On the other hand, one can easily rewrite Lemma \ref{4.1} and Proposition \ref{4.2} for every $ R $-module, instead of $R$ itself. Hence
by \cite[Corollary 1.3]{Mi4}, we can compute injective resolution for $ M $.
\end{remark}

Motivated by the theory of commutative Noetherian Gorenstein rings, Auslander introduced the notion of $ k $-Gorenstein algebras, see \cite{FGR}. Let $ k $ be a positive integer, $ R $ be a two-sided Noetherian ring and
$0 \longrightarrow R \longrightarrow I^{0}\longrightarrow I^{1}\longrightarrow I^{2} \longrightarrow \cdots$
be a minimal injective resolution of $ R $ viewed as a left $ R $-module. Then $ R $ is said to be left $ k $-Gorenstein if $ \fd (I^i)\leq i $ for every $ 0 \leq i \leq k-1 $.
Note that the notion of a $k$-Gorenstein ring is left-right symmetric \cite[Auslander's Theorem 3.7]{FGR}; i.e, $ R $ is left  $ k $-Gorenstein if and only if $ R $ is right $ k $-Gorenstein. Thus we just say that $R$ is $k$-Gorenstein if it is either left or right $ k $-Gorenstein. If $ R $ is $ k $-Gorenstein for all $ k $, then it is called an Auslander ring or sometimes an Auslander-Gorenstein ring.
Furthermore $R$ is said to be left, resp. right, quasi $k$-Gorenstein if $\fd (I^i)\leq i +1$ for every $0 \leq i \leq k-1$, see \cite{Hu}. Note that the notion of quasi $k$-Gorenstein ring is not left-right symmetric, see \cite{AR2}. If $R$ is left and right quasi $k$-Gorenstein, we say that $R$ is quasi $k$-Gorenstein.

\begin{corollary}
Let $ \CQ $ be a finite and acyclic quiver. If $ R $ is $ k$-Gorenstein,
then $ R\CQ $ is quasi $ k $-Gorenstein.
\end{corollary}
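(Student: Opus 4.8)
The plan is to reduce the corollary to a single estimate on the injective representations $e^{v}_{\rho}(E)$ that make up the minimal injective resolution of $R\CQ$, namely
\[
\fd_{R\CQ}(e^{v}_{\rho}(E)) \leq \fd_{R}(E) + 1
\qquad \text{for every } v\in V \text{ and every } R\text{-module } E.
\]
Granting this, the corollary is immediate from Theorem \ref{Main II}. The $i$th term of the minimal injective resolution of $A=R\CQ$ is $\bigoplus_{v\in V}J^{v,i}$, and each $J^{v,i}$ is the direct sum of the representations $e^{x}_{\rho}(\bigoplus_{\CQ(v,x)}I^{i})$ over the sinks $x\in V_{0}$ and $e^{y}_{\rho}(\bigoplus_{\CQ''(v,y)}I^{i-1})$ over the non-sinks $y\in V\setminus V_{0}$. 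Since flat dimension of a direct sum is the supremum of the flat dimensions of its summands, and since the flat dimension over $R$ of $\bigoplus_{\CQ(v,x)}I^{i}$ is at most $\fd_{R}(I^{i})$, the displayed estimate yields
\[
\fd_{R\CQ}\Big(\bigoplus_{v\in V}J^{v,i}\Big) \leq \max\{\,\fd_{R}(I^{i})+1,\ \fd_{R}(I^{i-1})+1\,\}.
\]
If $R$ is left $k$-Gorenstein then $\fd_{R}(I^{j})\leq j$ for $0\leq j\leq k-1$, so for $0\leq i\leq k-1$ the right-hand side is at most $\max\{i+1,\ i\}=i+1$ (the term involving $I^{-1}=0$ simply disappears when $i=0$). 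This is exactly the statement that $A$ is left quasi $k$-Gorenstein.

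To establish the displayed estimate I would invoke the standard resolution recalled in Remark \ref{4.4}. Applied to the representation $M=e^{v}_{\rho}(E)$ it produces a short exact sequence
\[
0 \lrt \bigoplus_{\alpha} e^{t(\alpha)}_{\lambda}((e^{v}_{\rho}(E))_{s(\alpha)}) \lrt \bigoplus_{w\in V} e^{w}_{\lambda}((e^{v}_{\rho}(E))_{w}) \lrt e^{v}_{\rho}(E) \lrt 0,
\]
the first sum being over the arrows $\alpha$ of $\CQ$; in each functor $e^{w}_{\lambda}$ (resp. $e^{t(\alpha)}_{\lambda}$) the argument is a direct sum of copies of $E$, because $(e^{v}_{\rho}(E))_{w}=\bigoplus_{\CQ(w,v)}E$. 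The key input is that $e^{w}_{\lambda}$ never raises flat dimension, i.e. $\fd_{R\CQ}(e^{w}_{\lambda}(N))\leq\fd_{R}(N)$ for every $R$-module $N$. This holds because $e^{w}_{\lambda}$ is exact (at each vertex it is a direct sum of copies of its argument) and sends flat $R$-modules to flat $R\CQ$-modules: being a left adjoint it commutes with direct limits, and it sends a finitely generated free $R$-module to a finitely generated projective representation, so it sends a direct limit of such modules — that is, an arbitrary flat module, by Lazard's theorem — to a direct limit of finitely generated projectives, which is flat. Pushing a flat resolution of $N$ through the exact functor $e^{w}_{\lambda}$ then gives the inequality. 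Consequently both outer terms of the short exact sequence have flat dimension at most $\fd_{R}(E)$, and the usual dimension shift along $0\to A\to B\to C\to 0$, which gives $\fd(C)\leq\max\{\fd(B),\fd(A)+1\}$ from the long exact sequence for $\Tor$, yields $\fd_{R\CQ}(e^{v}_{\rho}(E))\leq\fd_{R}(E)+1$.

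Because the notion of quasi $k$-Gorenstein is \emph{not} left-right symmetric, the argument so far only shows that $R\CQ$ is left quasi $k$-Gorenstein, and I would still need the right-hand version. This costs nothing: since $k$-Gorenstein \emph{is} left-right symmetric, $R$ is also right $k$-Gorenstein, equivalently $R^{\op}$ is left $k$-Gorenstein. Applying the computation above to the path algebra $R^{\op}\CQ^{\op}=(R\CQ)^{\op}$ — again a finite acyclic quiver over a $k$-Gorenstein ring — shows that $(R\CQ)^{\op}$ is left quasi $k$-Gorenstein, i.e. $R\CQ$ is right quasi $k$-Gorenstein. Together with the left statement this proves that $R\CQ$ is quasi $k$-Gorenstein.

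The genuinely new point, and the main obstacle, is the displayed estimate on $e^{v}_{\rho}(E)$; within it the only subtle step is the verification that $e^{w}_{\lambda}$ preserves flatness, everything else being bookkeeping against Theorem \ref{Main II} and the $\Tor$ long exact sequence. One should also record the routine fact that $R\CQ$ is two-sided Noetherian when $R$ is two-sided Noetherian and $\CQ$ is finite and acyclic, so that the notion of quasi $k$-Gorenstein applies to it in the first place.
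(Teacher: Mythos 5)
Your argument is correct and follows essentially the same route as the paper's own proof: apply the standard resolution of Remark \ref{4.4} to $e^{v}_{\rho}(E)$ to get $\fd(e^{v}_{\rho}(E))\leq \fd_{R}(E)+1$, then feed this into the description of the terms $\bigoplus_{v\in V}J^{v,i}$ from Theorem \ref{Main II}. You do, however, supply two points that the paper glosses over. First, the paper merely asserts that the two left terms of the standard resolution have flat dimension at most $\fd_{R}(E)$; your verification that $e^{w}_{\lambda}$ is exact and carries flat $R$-modules to flat representations (commuting with direct limits and sending finitely generated frees to finitely generated projective summands of $R\CQ$, then invoking Lazard) is exactly the missing justification. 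Second, and more substantively: since quasi $k$-Gorenstein is defined two-sidedly and is \emph{not} left-right symmetric, the left-module computation alone does not establish the stated conclusion, and the paper's proof stops there. Your reduction of the right-hand version to the left-hand one for $(R\CQ)^{\op}\cong R^{\op}\CQ^{\op}$, using the left-right symmetry of the $k$-Gorenstein property for $R$ and the fact that $\CQ^{\op}$ is again finite and acyclic, closes this gap cleanly. In short, same method, but your write-up is the more complete one.
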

\begin{proof}
Without loss of generality we can assume that $ \CQ $ is a connected quiver.
Let
$0 \longrightarrow R \longrightarrow I^{0}\longrightarrow I^{1}\longrightarrow I^{2} \longrightarrow \cdots$
be a minimal injective resolution of $ R $ such that $ \fd (I^i) \leq i$ for every $ 0 \leq i \leq k-1 $.
The Standard Resolution in \ref{4.4}
easily implies that if $ I \in \RMod $ is of flat dimension at most $n$, then
$ e^{v}_{\rho}(I) $, as an object in $ \Rep (\CQ, R) $, has flat dimension at most $n+1$.
This result follows from the fact that the first two left terms of this
sequence has flat dimension at most $n$.
Now it follows easily from Theorem \ref{Main II} that  $ \fd (\bigoplus_{v \in V}J^{v,i})\leq i+1 $ for every $ v \in V $ and for all $ 0\leq i \leq k-1 $.
\end{proof}

In \cite{N} Nakayama proposed a conjecture, which by results of Muller \cite{Mu} is equivalent to the following:
Let $ R $ be a finite dimensional algebra over a field $ K $ and let $0 \longrightarrow R \longrightarrow I^{0}\longrightarrow I^{1}\longrightarrow I^{2} \longrightarrow \cdots$ be a minimal injective resolution of $ R $ as a left $ R $-module. If all the $ I^i $  are projective, then $ R $ is self-injective. Later Auslander and Reiten in \cite{AR} proposed a generalized version of this conjecture which is called Generalized Nakayama Conjecture or for simplicity $\mathbf{GNC}$.
This conjecture says that  each indecomposable injective module is a summand of some terms in a minimal injective resolution of an artin algebra.
In the following corollary we show that If $ \mathbf{GNC} $ is true for $ R $, then $\mathbf{GNC}$ is true for $ R\CQ $, where $\CQ$ is a finite and acyclic quiver.

\begin{corollary}\label{GNC}
Let $ \CQ $ be a finite and acyclic quiver. If $ \mathbf{GNC} $ is true for $ R $, then $\mathbf{GNC}$ is true for $ R\CQ $.
\end{corollary}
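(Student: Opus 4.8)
The plan is to combine the classification of indecomposable injective representations with the explicit shape of the minimal injective resolution furnished by Theorem \ref{Main II}. As in the previous corollary we may assume $\CQ$ is connected. First I would pin down the indecomposable injectives of $\Rep(\CQ,R)$: by the corollary to Theorem \ref{Main I} every injective representation is a coproduct of representations $e^{v}_{\rho}(E)$ with $v\in V$ and $E\in\Inj(R)$, and the adjunction isomorphism $\Hom_{R\CQ}(X,e^{v}_{\rho}(E))\cong\Hom_{R}(X_{v},E)$ gives ${\rm End}(e^{v}_{\rho}(E_{0}))\cong{\rm End}_{R}(E_{0})$, because $(e^{v}_{\rho}(E_{0}))_{v}=E_{0}$ (acyclicity forces $\CQ(v,v)=\{e_{v}\}$). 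Hence $e^{v}_{\rho}(E_{0})$ is indecomposable whenever $E_{0}$ is, and by Krull--Schmidt the indecomposable injective $R\CQ$-modules are precisely the $e^{v}_{\rho}(E_{0})$ with $v\in V$ and $E_{0}$ an indecomposable injective $R$-module. Thus $\mathbf{GNC}$ for $R\CQ$ reduces to showing that each such $e^{v}_{\rho}(E_{0})$ occurs as a summand of some term of the minimal injective resolution of $R\CQ$.

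Next I would use the hypothesis that $\mathbf{GNC}$ holds for $R$: the indecomposable injective $E_{0}$ is a direct summand of some term $I^{j}$ of the minimal injective resolution of $R$. Since $e^{v}_{\rho}$ preserves finite direct sums, $e^{v}_{\rho}(E_{0})$ is then a summand of $e^{v}_{\rho}(I^{j})$, and it suffices to exhibit $e^{v}_{\rho}(I^{j})$ as a summand of one of the terms $\bigoplus_{u\in V}J^{u,i}$ described in Theorem \ref{Main II}. Recall
$$J^{u,i}=\Big(\bigoplus_{x\in V_{0}}e^{x}_{\rho}(\bigoplus_{\CQ(u,x)}I^{i})\Big)\bigoplus\Big(\bigoplus_{y\in V\setminus V_{0}}e^{y}_{\rho}(\bigoplus_{\CQ''(u,y)}I^{i-1})\Big).$$
If $v\in V_{0}$ is a sink, then taking $u=v$ in the first summand and using $e_{v}\in\CQ(v,v)$ shows that $e^{v}_{\rho}(I^{j})$ is a summand of $J^{v,j}$, and we are done in this case.

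The remaining, genuinely combinatorial, point is the non-sink case, which I expect to be the main obstacle: for $v\in V\setminus V_{0}$ I must produce some $u$ with $\CQ''(u,v)\neq\varnothing$, so that $e^{v}_{\rho}(I^{j})$ appears in the second summand of $J^{u,j+1}$. The idea is to choose an arrow $\alpha\colon v\lrt v'$ and take $u=v'$. Because $\CQ$ is acyclic and there is already a path $v\lrt v'$, there is no path $v'\lrt v$, i.e. $\CQ(v',v)=\varnothing$, so the subtracted set $\alpha_{v',v}\CQ(v',v)$ in Lemma \ref{4.1} is empty; on the other hand the trivial path $e_{v'}$ lies in $\CQ(v',t(\alpha))=\CQ(v',v')\subseteq\CQ'(v',v)$. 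Hence $e_{v'}\in\CQ''(v',v)$, so $\CQ''(v',v)\neq\varnothing$ and $e^{v}_{\rho}(I^{j})$ is a direct summand of $J^{v',j+1}$, hence of $\bigoplus_{u\in V}J^{u,j+1}$. Combining the two cases, every indecomposable injective $e^{v}_{\rho}(E_{0})$ is a summand of a term of the minimal injective resolution of $R\CQ$, which is exactly $\mathbf{GNC}$ for $R\CQ$. Minor points I would still verify are that $R\CQ$ is an artin algebra (so that $\mathbf{GNC}$ is meaningful and Krull--Schmidt applies) and the reduction to the connected case.
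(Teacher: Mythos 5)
Your proposal is correct and follows essentially the same route as the paper: identify the indecomposable injectives as $e^{v}_{\rho}(E_0)$, use $\mathbf{GNC}$ for $R$ to place $E_0$ inside some $I^{j}$, and then locate $e^{v}_{\rho}(I^{j})$ in the resolution from Theorem \ref{Main II} by treating sinks via the first summand of $J^{v,j}$ and non-sinks via an arrow $v\lrt w$ and the trivial path in $\CQ''(w,v)\subseteq J^{w,j+1}$. The only difference is that you spell out the paper's ``trivial verification'' (the acyclicity argument showing $\CQ''(w,v)\neq\varnothing$) and the indecomposability of $e^{v}_{\rho}(E_0)$ explicitly, which the paper leaves implicit.
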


\begin{proof}
Without loss of generality we can assume that $ \CQ $ is a connected quiver.
Let
$0 \longrightarrow R \longrightarrow I^{0}\longrightarrow I^{1}\longrightarrow I^{2} \longrightarrow \cdots$
be a minimal injective resolution of $ R $. By assumption, every indecomposable injective module appears as a summand of some $ I^{n}.$
Note that every indecomposable injective $ R\CQ $-module is of the form $ e^{v}_{\rho}(I) $, where $ v $ is a vertex of $ \CQ $ and $ I $ is an indecomposable injective $ R $-module.
By Theorem \ref{Main II}, we know that  for every $ v \in V $ and $ i \geq 0 $, $ e^{v}_{\rho}(I^i) $ appears in the minimal injective resolution of $ R\CQ $ as a summand of some terms.
If $ v \in V_0 $, then $ e^{v}_{\rho}(I^i) $ is a direct summand of $ J^{v,i} $ for every $ i\geq 0 $.
If $ v \in V \setminus V_0 $, then there is a vertex $ w \in V_{\Bs(v)} $. A trivial verification shows that $ e^{v}_{\rho}(I^i) $ is a direct summand of $ J^{w,i+1} $ for every $ i \geq 0 $. \end{proof}

\section{Auslander-Gorenstein Property of Path Algebras}\label{5}
It was proved by Iwanaga and Wakamatsu in \cite[Theorem 8]{IW} that a left and right Artinian ring $ R $ is a $ k $-Gorenstein ring if and only if so is the lower triangular matrix ring $ T_{n}(R) $ of degree $ n $ over $ R $. Observe that this is a generalization of \cite[Theorem 3.10]{FGR}, where the case $ n=2 $ was established.
In this section, we study the  $ k $-Gorensteiness of $ R\CQ $, where $ \CQ $ is a finite, connected and acyclic quiver. 
We know that when $ \CQ $ is a linear quiver $ \overrightarrow{A_n} $, the algebra $ R\CQ $ is isomorphic to the lower triangular matrix ring of degree $ n $ over $ R $. 

\begin{lemma}
Let $ \CQ=\overrightarrow{A_{n}} $ and $ R $ be a $ k $-Gorenstein ring for some positive integer $ k $. Then $ R\CQ $ is a $ k$-Gorenstein ring.
\end{lemma}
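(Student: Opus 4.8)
The plan is to compute the flat dimension of each term of the minimal injective resolution of $R\CQ$, which is already written down explicitly in Example \ref{ex}. Since $\CQ=\overrightarrow{A_{n}}$ is the line quiver $1\rt 2\rt\cdots\rt n$, its unique sink is $n$, so $V_0=\{n\}$, and summing the resolutions of the $e^v_\lambda(R)$ over all vertices $v$ gives, for each $j\geq 1$, the $j$-th term
\[ \left(e^n_\rho(I^j)\right)^{\oplus n}\oplus\bigoplus_{v=1}^{n-1}e^v_\rho(I^{j-1}), \]
with $0$-th term $\left(e^n_\rho(I^0)\right)^{\oplus n}$, where $0\rt R\rt I^0\rt I^1\rt\cdots$ is the minimal injective resolution of $R$. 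As the flat dimension of a direct sum is the supremum of those of the summands, it suffices to bound $\fd(e^n_\rho(I^j))$ and $\fd(e^v_\rho(I^{j-1}))$ for $v\neq n$ under the hypothesis $\fd(I^i)\leq i$ for $0\leq i\leq k-1$.

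For the non-sink summands I would simply reuse the estimate behind the quasi $k$-Gorenstein corollary: the Standard Resolution of \ref{4.4}, applied to $M=e^v_\rho(I)$, has its two leftmost terms of the form $e^w_\lambda(M_w)$ with $M_w$ a finite sum of copies of $I$, whence $\fd(e^v_\rho(I))\leq\fd(I)+1$ for every vertex $v$. Thus for $1\leq j\leq k-1$ and $v\neq n$,
\[ \fd\big(e^v_\rho(I^{j-1})\big)\leq\fd(I^{j-1})+1\leq(j-1)+1=j. \]

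The key step, and the point at which genuine $k$-Gorensteinness (not just the quasi version) is gained, is the \emph{sharp} bound at the sink: I claim $\fd(e^n_\rho(I))\leq\fd(I)$, with no shift. Over the line quiver $\CQ(w,n)$ and $\CQ(1,w)$ each consist of a single path, so both $e^n_\rho(I)$ and $e^1_\lambda(I)$ equal the constant representation $I\st{=}{\rt}I\st{=}{\rt}\cdots\st{=}{\rt}I$; in particular $e^n_\rho(I)=e^1_\lambda(I)$. Now $e^1_\lambda$ sends projectives to projectives and, as a left adjoint, commutes with direct limits, so it carries flat modules to flat representations; being the constant-diagram functor it is moreover exact, hence it turns a flat resolution of $I$ into one of $e^n_\rho(I)$ of the same length. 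Therefore $\fd(e^n_\rho(I^j))\leq\fd(I^j)\leq j$ for $0\leq j\leq k-1$.

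Putting the two bounds together, every summand of the $j$-th term of the minimal injective resolution of $R\CQ$ has flat dimension at most $j$ for each $0\leq j\leq k-1$, so the term itself does; this is exactly the defining condition for $R\CQ$ to be $k$-Gorenstein. I expect the only real obstacle to be the sink computation, which removes the spurious $+1$ of the general estimate and is what upgrades the conclusion from quasi $k$-Gorenstein to $k$-Gorenstein; the remainder is bookkeeping with the explicit resolution of Example \ref{ex}.
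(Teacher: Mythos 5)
Your proposal is correct and follows essentially the same route as the paper: read off the terms $J^i$ from Example \ref{ex}, bound the non-sink summands by $\fd(e^v_\rho(I^{i-1}))\leq\fd(I^{i-1})+1\leq i$ via the standard resolution, and use the sharp bound $\fd(e^n_\rho(I^i))=\fd(I^i)\leq i$ at the sink. The only difference is that you justify the sink bound explicitly (via the identification $e^n_\rho(I)=e^1_\lambda(I)$ for the line quiver and the fact that $e^1_\lambda$ is exact and preserves flatness), a detail the paper simply asserts.
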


\begin{proof}
Let
$0 \longrightarrow R \longrightarrow I^{0}\longrightarrow I^{1}\longrightarrow I^{2} \longrightarrow \cdots$
be a minimal injective resolution of $ R $ such that $ \fd (I^i) \leq i$ for every $ 0 \leq i \leq k-1 $.
Consider $0 \longrightarrow R\CQ \longrightarrow J^{0}\longrightarrow J^{1}\longrightarrow J^{2} \longrightarrow \cdots$
as a minimal injective resolution of $ R\CQ $.
On the other hand, by Example \ref{ex}, we know that
$ J^i=(\bigoplus_{n}e^{n}_{\rho}(I^i)) \bigoplus (e^{1}_{\rho}(I^{i-1})\bigoplus \cdots \bigoplus e^{n-1}_{\rho}(I^{i-1}) )$ for every $ i\geq 0 $, where $ I^{-1}=0 $. Note that  $ \fd (e^{n}_{\rho}( I^{i}))=\fd (I^i) \leq i $.
Also for every $1\leq j \leq n-1$, $\fd (e^{j}_{\rho}(I^{i-1}))\leq \fd (I^{i-1})+1\leq (i-1)+1=i $. Therefore, the path algebra $ R\CQ $ is a $ k$-Gorenstein ring.
\end{proof}

\begin{lemma}\label{5.2}
Let $\CQ $ be a finite, connected and acyclic quiver with $ n$ vertices. If the path algebra $ R\CQ $ is a  $ k $-Gorenstein ring, then $ \CQ$ is the linear quiver $\overrightarrow{A_{n}}$ and $ R $ is $ k $-Gorenstein.
\end{lemma}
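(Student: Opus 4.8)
The plan is to read off both assertions from the explicit minimal injective resolution of $A=R\CQ$ provided by Theorem \ref{Main II}, whose $i$-th term is $\bigoplus_{v\in V}J^{v,i}$, treating the statement ``$R$ is $k$-Gorenstein'' and the statement ``$\CQ=\overrightarrow{A_n}$'' separately.

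First I would settle that $R$ is $k$-Gorenstein. Fix a sink $x\in V_0$. Taking $v=x$ in the formula for $J^{x,i}$ and using $\CQ(x,x)=\{\mathrm{triv}\}$, the representation $e^{x}_{\rho}(I^{i})$ occurs as a direct summand of the $i$-th term $\bigoplus_{v\in V}J^{v,i}$. The evaluation functor $e^{x}$ is exact, commutes with direct limits (it has both adjoints), and sends each projective generator $e^{w}_{\lambda}(P)$ to $\bigoplus_{\CQ(w,x)}P$, a flat $R$-module; since every flat module is a direct limit of projectives, $e^{x}$ carries flats to flats, so $\fd_{R}(M_x)\le \fd_{A}(M)$ for every representation $M$. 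Applied to $M=e^{x}_{\rho}(I^{i})$, for which $(e^{x}_{\rho}(I^{i}))_{x}=I^{i}$, this gives $\fd_{R}(I^{i})\le \fd_{A}(e^{x}_{\rho}(I^{i}))\le \fd_{A}(\bigoplus_{v\in V}J^{v,i})\le i$ for all $0\le i\le k-1$, so $R$ is $k$-Gorenstein.

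For the shape of $\CQ$ I would use only the weakest instance, that $A$ is $1$-Gorenstein, i.e. that the $0$-th term $\bigoplus_{v\in V}J^{v,0}$ is flat. Since $e^{x}_{\rho}$ preserves coproducts, this term is a direct sum of copies of the $e^{x}_{\rho}(I^{0})$ with $x$ ranging over the sinks, so each $e^{x}_{\rho}(I^{0})$ is flat. I claim this alone forces every vertex to have in-degree at most $1$. The key input is the characterization of flat representations (cf. \cite{EE1,EEG}): if $M$ is flat then at every vertex $v$ the gluing map $\theta_v\colon \bigoplus_{t(\alpha)=v}M_{s(\alpha)}\lrt M_v$ is a monomorphism. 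One sees this directly: a flat $M$ is a direct limit of projective representations, for which each $\theta_v$ is a split monomorphism, and $\theta_v$ commutes with direct limits, so it is a filtered colimit of monomorphisms and hence mono. Now suppose some vertex $v$ has two distinct incoming arrows $\alpha_1,\alpha_2$, and choose a sink $x$ with a path from $v$ to $x$. For $M=e^{x}_{\rho}(I^{0})$ one has $M_v=\bigoplus_{\CQ(v,x)}I^{0}\neq 0$ (as $R\neq 0$), while each structure map $M_{\alpha_j}\colon M_{s(\alpha_j)}\to M_v$ is surjective (it reads off the coordinates indexed by those paths $s(\alpha_j)\to x$ that begin with $\alpha_j$). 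Two surjections onto a nonzero module force $\Ker\theta_v\neq 0$, contradicting flatness of $e^{x}_{\rho}(I^{0})$; hence every vertex has in-degree at most $1$. Since the $k$-Gorenstein property is left-right symmetric and $(R\CQ)^{\op}\cong R^{\op}\CQ^{\op}$, the identical argument applied to $R^{\op}\CQ^{\op}$ shows every vertex of $\CQ^{\op}$ has in-degree at most $1$, i.e. every vertex of $\CQ$ has out-degree at most $1$. A connected acyclic quiver in which every vertex has in-degree and out-degree at most $1$ has underlying graph a path with linear orientation, so $\CQ=\overrightarrow{A_n}$.

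The main obstacle is the middle step: one must know that flatness of a representation is detected vertex-by-vertex by monomorphy of the gluing maps $\theta_v$, and then pinpoint, from a single vertex of in-degree $\ge 2$, the exact vertex at which the gluing map of $e^{x}_{\rho}(I^{0})$ fails to be injective. I expect the bookkeeping to be cleanest if one works throughout with an arbitrary nonzero flat module $F=I^{0}$ (which exists because $R\neq 0$) rather than specializing to a field, so that the reduction to in-degrees and its dual via $\CQ^{\op}$ proceed uniformly over all coefficient rings $R$.
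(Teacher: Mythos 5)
Your proof is correct, and in two of its three steps it takes a genuinely different (and arguably cleaner) route than the paper. For the claim that $R$ is $k$-Gorenstein, the paper waits until $\CQ=\overrightarrow{A_{n}}$ is established and then asserts $\fd(I^i)\le\fd(e^{n}_{\rho}(I^i))$ without justification; you isolate and prove the underlying inequality $\fd_R(M_x)\le\fd_{R\CQ}(M)$ for a sink $x$ (exactness of $e^{x}$ plus preservation of flats via Lazard's theorem), which works for an arbitrary acyclic $\CQ$ and makes this half of the lemma independent of the shape argument. For the shape, your treatment of a vertex $v$ of in-degree $\ge 2$ coincides with the paper's: both exhibit a nonzero kernel of the gluing map $\bigoplus_{t(\alpha)=v}M_{s(\alpha)}\lrt M_v$ for $M=e^{x}_{\rho}(I^{0})$ with $x$ a sink below $v$, and both rest on the fact that flat representations have monic gluing maps (the paper cites \cite[Proposition 3.4]{EOT}; you essentially reprove the needed direction via Lazard, which is fine). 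Where you genuinely diverge is the out-degree bound: the paper stays on the left and runs a three-case analysis showing that once all in-degrees are $\le 1$ but some out-degree is $\ge 2$, no $e^{x}_{\rho}(I)$ is flat for any nonzero $I$; you instead invoke Auslander's left--right symmetry of the $k$-Gorenstein property together with $(R\CQ)^{\op}\cong R^{\op}\CQ^{\op}$ and rerun the in-degree argument on $\CQ^{\op}$ (a sink of $\CQ^{\op}$ reachable from $v$ exists by finiteness and acyclicity). This is shorter and eliminates the case analysis, at the mild cost of using the symmetry theorem (which the paper already cites and relies on) and of applying Theorem \ref{Main II} to $\CQ^{\op}$ over $R^{\op}$ --- both legitimate. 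One small slip in your closing remark: $I^{0}=\BE(R)$ need not be flat, but your argument only uses $I^{0}\neq 0$, so nothing is affected.
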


\begin{proof}
We first show that $ \CQ $ is a linear quiver $ \overrightarrow{A_{n}} $. To this end, suppose to the contrary that $ \CQ\neq \overrightarrow{A_n} $. Thus, there is a vertex $v$ in $ \CQ $ such that
$ \vert E_{\Bt(v)}\vert >1 $ or $ \vert E_{\Bs(v)}\vert >1 $; i.e., there exist more than one arrows ending at $ v $ or begining at $ v $.
In the case $ \vert E_{\Bt(v)}\vert >1 $, assume that $ \alpha_{1}, \cdots, \alpha_{m} $ are all the arrows with terminal vertex $ v $, where $ m $ is an integer more than $ 1 $. Also suppose that $ s(\alpha_{i})=v_{i} $ for every $ i=1,... ,m $. Since $ \CQ $ is a connected and acyclic quiver, there is a sink vertex $ w\in V_{0} $ such that $ \CQ(v,w)\neq \emptyset $. Thus we can consider the following diagram as a part of quiver $ \CQ $.
\[\xymatrix{v_{1}\ar[rd]^{\alpha_{1}}   &  & & \\  \vdots & v \ar[r]  &\ar[r] \cdots & w  \\ v_{m}\ar[ru]_{\alpha_{m}}  &  & & }\]

We know that for a vertex $ x $, $ e^{w}_{\lambda}(R)_{x} = \bigoplus_{\CQ(w,x)}R $ is $ R $ if $ x=w $ and is zero otherwise. Also, for every module $I$,
$ e^{x}_{\rho}(I)_{w}= \prod_{\CQ(w,x)}I $ is $ I $ if $ x=w $ and is zero otherwise. Thus, $ e^{w}_{\rho}(I^{0}) $ is a direct summand of $ \BE(e^{w}_{\lambda}(R)) $, where $ I^0 = \BE (R) $. See also Theorem \ref{Main II}.
Clearly, the natural morphism
$ \bigoplus_{1\leq i \leq m}( \prod_{\CQ(v_{i},w)}I^0) \lrt \prod_{\CQ(v,w)}I^0 $
is not injective. Therefore, by \cite[Proposition 3.4]{EOT},  $ e^{w}_{\rho}(I^0) $ is not flat.
Hence, $ \BE(e^{w}_{\lambda}(R))$ is not flat.
From this we can conclude that $R\CQ$ is not
$ k $-Gorenstein, a contradiction.

Now consider the case where $ \vert E_{\Bs(v)}\vert >1 $. In view of the above argument, we may assume that $ \vert E_{\Bt(w)}\vert \leq1 $ for every vertex $w$; i.e., for every vertex $ w $ in $ \CQ $ there is at most one arrow with terminal vertex $ w $. We show that in this case, $ e^{x}_{\rho}(I) $ is not flat, for every non-zero $ R $-module $ I $ and every $ x \in V $. This contradicts the fact that $ R\CQ $ is $ k $-Gorenstein.
In this case, assume that $ \alpha_{1}, \cdots, \alpha_{m} $ are all the arrows with initial vertex $ v $, where $ m $ is an integer more than $ 1 $. Also, suppose that $ s(\alpha_{i})=v_{i} $ for every $ i=1,... ,m $. Thus, we can consider the following diagram as a part of quiver $ \CQ $
\[\xymatrix{& v_{1}\\ v \ar[ur]^{\alpha_{1}} \ar[dr]_{\alpha_{m}}   & \vdots \\  & v_{m} }\]

We need to consider the following two cases:

$(i) $ Let $ \CQ(v,x) \neq \emptyset$. Since for every vertex $ w $ in $ \CQ $ there is at most one arrow with terminal vertex $ w $, we can conclude that $\vert \CQ(v,x ) \vert=1 $. Hence, we can assume that $ \CQ(v,x )=\lbrace p \rbrace$ for some path $p$ with initial vertex $v$ and terminal vertex $x$. Therefore, there is an integer $ i $ such that $ 1\leq i \leq m $ and $ \alpha_{i} $ is a part of the path $ p $. Thus, for every $ j\neq i $ there is no path from $ v_{j} $ to $ x $.
Clearly the natural morphism
$ \bigoplus_{t(\alpha)=v_j}e^{x}_{\rho}(I)_{s(\alpha)}\lrt e^{x}_{\rho}(I)_{v_j} $ is not injective, because $e^{x}_{\rho}(I)_{v_{j}}=0$  and $e^{x}_{\rho}(I)_{v}=I\neq 0$. Therefore $e^{x}_{\rho}(I)$ is not flat.

$(ii)$ Let $\CQ(v,x) = \emptyset$. In this case the proof falls naturally into two parts:

$(1)$ Let $\CQ(x,v)\neq\emptyset$. Hence, we can assume that $ \CQ(x,v )=\lbrace p \rbrace$ for some path $ p $
beginning at vertex $ x $ and ending at vertex $ v $. If the arrow $ \beta $ is a part of the path $p$ with initial vertex $ x $, then we see that the natural morphism  $ \bigoplus_{t(\alpha)=t(\beta)}e^{x}_{\rho}(I)_{s(\alpha)}\lrt e^{x}_{\rho}(I)_{t(\beta)} $ is not injective, because $ e^{x}_{\rho}(I)_{t(\beta)}=0 $  and $ e^{x}_{\rho}(I)_{x}=I\neq 0 $.

$(2) $ Let $ \CQ(x,v) = \emptyset$. Since $ \CQ $ is connected, $ \CQ(v,x)=\emptyset $ and $ \CQ(x,v)=\emptyset $, there is a vertex $ w $ such that $ \CQ(w,v) \neq \emptyset$ and $ \CQ(w,x)\neq\emptyset $.
We can assume that $ \CQ(w,v )=\lbrace p \rbrace$  for some path $ p $ with initial vertex $ w $ and terminal vertex $ v $. If the arrow $ \beta $ is a part of the path $p$ such that $ \CQ(s(\beta),x)\neq \emptyset $ and $ \CQ(t(\beta),x)=\emptyset $, the natural morphism  $ \bigoplus_{t(\alpha)=t(\beta)}e^{x}_{\rho}(I)_{s(\alpha)}\lrt e^{x}_{\rho}(I)_{t(\beta)} $ is not injective, because $ e^{x}_{\rho}(I)_{t(\beta)}=0 $  and $ e^{x}_{\rho}(I)_{s(\beta)}=I\neq 0 $.

Now the proof is completed by showing that $ R $ is a $ k $-Gorenstein ring. By the above, we know that $ \CQ=\overrightarrow{A_n} $. Let
$  0 \longrightarrow R \longrightarrow I^{0}\longrightarrow I^{1}\longrightarrow I^{2} \longrightarrow \cdots $
be the minimal injective resolution of $R$. Then
$0 \lrt e^{1}_{\lambda}(R)\lrt e^{n}_{\rho}(I^0)\lrt e^{n}_{\rho}(I^1)\lrt e^{n}_{\rho}(I^2)\lrt \cdots $
is the minimal injective resolution of $e^{1}_{\lambda}(R)$. Since $ R\CQ $ is a $ k $-Gorenstein ring, we conclude that $\fd (I^i) \leq \fd (e^{n}_{\rho}(I^i))\leq i$ for every $ 0\leq i\leq k-1 $, and the proof is complete.
\end{proof}

We summarize the above results in the following theorem.

\begin{theorem}\label{Main III}
Let $\CQ$ be a finite, connected and acyclic quiver with $n$ vertices. Then the path algebra $R\CQ$ is a $k$-Gorenstein ring if and only if $\CQ=\overrightarrow{A_{n}}$ and $ R $ is a $ k $-Gorenstein ring.
\end{theorem}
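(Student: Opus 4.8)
The plan is to establish the biconditional by proving its two implications separately; together they are exactly the contents of the two lemmas preceding this theorem, so the proof amounts to assembling those two statements. Let me nonetheless record the mechanism behind each half, since each direction leans on a different one of the explicit descriptions built earlier.

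For the direction ``$\CQ = \overrightarrow{A_n}$ and $R$ is $k$-Gorenstein $\Longrightarrow$ $R\CQ$ is $k$-Gorenstein,'' I would feed the specialization of Theorem \ref{Main II} to the line quiver recorded in Example \ref{ex} into a flat-dimension count. There the $i$-th term of the minimal injective resolution of $R\CQ$ is a direct sum of a sink-summand $\bigoplus_n e^n_\rho(I^i)$ together with the summands $e^j_\rho(I^{i-1})$ for $1 \le j \le n-1$. Using $\fd(e^n_\rho(I^i)) = \fd(I^i) \le i$ for the sink-part and the bound $\fd(e^j_\rho(I^{i-1})) \le \fd(I^{i-1}) + 1 \le i$ for the rest (the latter coming from the Standard Resolution of Remark \ref{4.4}), I obtain $\fd(J^i) \le i$ for every $0 \le i \le k-1$, which is the $k$-Gorenstein condition.

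For the converse I would argue in two stages. First, assuming $R\CQ$ is $k$-Gorenstein, I would force $\CQ$ to be linear by contradiction: if some vertex carried more than one incoming or more than one outgoing arrow, then by the description of the terms in Theorem \ref{Main II} an injective summand $e^x_\rho(I)$ would already occur in $\BE(R\CQ)$, yet the defining structure map $\bigoplus_{t(\alpha)=w} e^x_\rho(I)_{s(\alpha)} \to e^x_\rho(I)_w$ would fail to be injective at a suitable arrow; by the flatness criterion \cite[Proposition 3.4]{EOT} this makes $e^x_\rho(I)$ non-flat, contradicting $\fd(I^0) \le 0$. Once linearity is in hand, the resolution $0 \to e^1_\lambda(R) \to e^n_\rho(I^0) \to e^n_\rho(I^1) \to \cdots$ from Example \ref{ex} together with $\fd(I^i) \le \fd(e^n_\rho(I^i)) \le i$ recovers the $k$-Gorensteinness of $R$.

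The main obstacle lives entirely in the quiver-shape half of the converse. Ruling out every non-linear $\CQ$ demands a case analysis split first according to whether the branching vertex has multiple sources or multiple targets, and then, in the multiple-target situation, into further sub-cases governed by the relative position of the test vertex $x$ and the branching vertex along paths of $\CQ$. In each sub-case the whole argument reduces to locating a single arrow across which the structure map of $e^x_\rho(I)$ loses injectivity; organizing these sub-cases so that they genuinely exhaust all connected non-linear acyclic quivers is where the real care is required, whereas both the forward direction and the final extraction of $R$'s $k$-Gorensteinness are routine once the explicit resolutions are available.
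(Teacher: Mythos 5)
Your proposal is correct and follows essentially the same route as the paper: the theorem is assembled from the two preceding lemmas, whose proofs are exactly as you describe --- the forward direction via the flat-dimension count $\fd(e^{j}_{\rho}(I^{i-1}))\leq \fd(I^{i-1})+1\leq i$ applied to the resolution of Example \ref{ex}, and the converse via the case analysis on vertices with multiple incoming or multiple outgoing arrows, using \cite[Proposition 3.4]{EOT} to detect non-flatness of $e^{x}_{\rho}(I)$, followed by reading off the $k$-Gorensteinness of $R$ from the resolution of $e^{1}_{\lambda}(R)$.
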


Let $M$ be an $R$-module. $M$ is said to have the dominant dimension at least $ n \in \N $, written $\rm{dom.dim}\ M\geq n$, if there exists a minimal injective resolution
$0 \longrightarrow M \longrightarrow I^{0}\longrightarrow I^{1}\longrightarrow I^{2} \longrightarrow \cdots$
of $M$ such that all the modules $I^j$ with $0 \leq j \leq n-1$ are projective-injective.
If the injective envelope $ I^{0} $ of M is not projective, we set $\rm{dom.dim}\ M = 0$. In the
case $\rm{dom.dim}\ M\geq n$ and $\rm{dom.dim}\ M\ngeq n+1$, we say $\rm{dom.dim}\ M= n$. If no
such $n$ exists, we write $\rm {dom.dim}\ M= \infty$ .
Using our techniques, we can prove the following theorem as a generalization of Theorem 3.6 of \cite{A1}.

\begin{theorem}
Let $ \CQ $ be a finite, connected and acyclic quiver  with $ n > 1 $ vertices. Then
\[\begin{array}{ll} \rm{dom.dim}\ R\CQ= \left \{\begin{array}{lll} 1 & {\rm if} \ \ \CQ=\overrightarrow{A_n}\ {\rm and} \ \rm{dom.dim}\ R\neq 0 , \\  \\ 0  &  \rm{otherwise.} \end{array} \right.\end{array}\]
\end{theorem}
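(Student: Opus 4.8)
The plan is to read the minimal injective resolution of $R\CQ$ directly off Theorem \ref{Main II} (and Example \ref{ex} in the linear case) and to decide, term by term, which terms are projective-injective. Write $0 \to R \to I^0 \to I^1 \to \cdots$ for the minimal injective resolution of $R$, so that $I^0 = \BE(R)$ and $\rm{dom.dim}\ R \neq 0$ means precisely that $I^0$ is projective. Since $\rm{dom.dim}\ R\CQ \geq 1$ holds exactly when the injective envelope $\bigoplus_{v\in V} J^{v,0}$ of $R\CQ$ is projective, the first task is to determine when this coproduct of representations $e^{x}_{\rho}(I^0)$, with $x$ ranging over the sinks in $V_0$, is projective. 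Once that is settled, distinguishing dominant dimension $1$ from $\geq 2$ amounts to testing projectivity of the single next term $\bigoplus_{v\in V} J^{v,1}$.

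First I would dispose of the case $\CQ \neq \overrightarrow{A_n}$. Here I reuse the flatness computations in the proof of Lemma \ref{5.2}: in either branching situation ($\vert E_{\Bt(v)}\vert > 1$ or $\vert E_{\Bs(v)}\vert > 1$) one produces a sink $w$ for which the structure map of $e^{w}_{\rho}(I^0)$ at the branching vertex fails to be injective, so by \cite[Proposition 3.4]{EOT} the representation $e^{w}_{\rho}(I^0)$ is not flat. Choosing $v$ with $\CQ(v,w)\neq\emptyset$ (e.g. $v=w$), this $e^{w}_{\rho}(I^0)$ is a direct summand of $\bigoplus_{v\in V} J^{v,0}$, which is therefore not flat and hence not projective. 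Thus $\rm{dom.dim}\ R\CQ = 0$, matching the ``otherwise'' clause.

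Next, for $\CQ = \overrightarrow{A_n}$ the unique sink is $n$ and each $\CQ(v,n)$ is a singleton, so $\bigoplus_{v\in V} J^{v,0} = \bigoplus_{v=1}^{n} e^{n}_{\rho}(I^0)$. The key observation is that $e^{n}_{\rho}(E) \cong e^{1}_{\lambda}(E)$ is the constant representation on $E$ (all transition maps being identities), and that such a constant representation is projective if and only if $E$ is a projective $R$-module: indeed $e^{1}_{\lambda}$ is left adjoint to the exact evaluation $e^{1}$, so it preserves projectives, while $e^{1}e^{1}_{\lambda} = \id$ together with the exactness of $e^{1}_{\rho}$ (its value is $E$ concentrated at the source) shows $e^{1}$ preserves projectives, giving the converse. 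Hence $\bigoplus_{v\in V} J^{v,0}$ is projective exactly when $I^0 = \BE(R)$ is projective, i.e. when $\rm{dom.dim}\ R \neq 0$; and when $\rm{dom.dim}\ R = 0$ this already forces $\rm{dom.dim}\ R\CQ = 0$.

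Finally, assuming $\CQ = \overrightarrow{A_n}$ and $\rm{dom.dim}\ R \neq 0$, so that the injective envelope of $R\CQ$ is projective-injective and $\rm{dom.dim}\ R\CQ \geq 1$, I would show the value is exactly $1$ by inspecting $\bigoplus_{v\in V} J^{v,1} = \big(\bigoplus_{v=1}^{n} e^{n}_{\rho}(I^1)\big) \oplus \big(\bigoplus_{j=1}^{n-1} e^{j}_{\rho}(I^0)\big)$, read off from Example \ref{ex}. Because $n > 1$ the non-sink summand $e^{1}_{\rho}(I^0)$ genuinely occurs; it is supported only at the source vertex $1$ with nonzero value $I^0$, so the structure map into vertex $2$ is $I^0 \to 0$, which is not injective, and \cite[Proposition 3.4]{EOT} shows it is not flat. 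Thus $\bigoplus_{v\in V} J^{v,1}$ is not projective, so $\rm{dom.dim}\ R\CQ \not\geq 2$, giving $\rm{dom.dim}\ R\CQ = 1$. The three cases together yield the stated formula. The main obstacle is the clean identification, carried out in the third paragraph, of the projective-injective representations over $R\overrightarrow{A_n}$ as exactly the constant representations $e^{n}_{\rho}(E) \cong e^{1}_{\lambda}(E)$ with $E$ projective-injective over $R$; the remaining steps are bookkeeping with the explicit resolution and the same flatness criterion already exploited in Lemma \ref{5.2}.
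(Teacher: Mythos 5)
Your proof is correct and follows essentially the same route as the paper: read the terms $J^{v,0}$ and $J^{v,1}$ off Theorem \ref{Main II} and Example \ref{ex}, invoke the non-flatness computations of Lemma \ref{5.2} when $\CQ\neq\overrightarrow{A_n}$, and in the linear case test projectivity of the zeroth and first terms. The only difference is that you supply details the paper leaves implicit --- the identification $e^{n}_{\rho}(E)\cong e^{1}_{\lambda}(E)$ with the adjunction argument showing $J^{0}$ is projective exactly when $I^{0}$ is, and the explicit non-flatness of the summand $e^{1}_{\rho}(I^{0})$ of $J^{1}$ --- which if anything makes your write-up tighter than the published one.
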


\begin{proof}
Let
$0 \longrightarrow R \longrightarrow I^{0}\longrightarrow I^{1}\longrightarrow I^{2} \longrightarrow \cdots$
be a minimal injective resolution of $ R $ such that $ \fd (I^i) \leq i$ for every $ 0 \leq i \leq k-1 $.
Consider $0 \longrightarrow R\CQ \longrightarrow J^{0}\longrightarrow J^{1}\longrightarrow J^{2} \longrightarrow \cdots$
as a minimal injective resolution of $ R\CQ $. We consider the following two cases: $ (i) $ $ \CQ\neq\overrightarrow{A_n}  $. By Lemma \ref{5.2}, $ J^0 $ is not projective and  hence $ \rm{dom.dim}\ R\CQ = 0 $. $ (ii) $ $ \CQ = \overrightarrow{A_n}  $. By Example \ref{ex}, we know that
$ J^i=(\bigoplus_{n}e^{n}_{\rho}(I^i)) \bigoplus (e^{1}_{\rho}(I^{i-1})\bigoplus \cdots \bigoplus e^{n-1}_{\rho}(I^{i-1}) )$ for every $ i\geq 0 $, where $ I^{-1}=0 $. If $ \rm{dom.dim}\ R = 0 $, then  $ J^0 $ is not projective and hence $ \rm{dom.dim}\ R\CQ = 0 $. But if $ \rm{dom.dim}\ R\CQ \neq 0 $, then $ J^0 $ is projective. Hence,  $ \rm{dom.dim}\ R\CQ \geq 1 $. On the other hand, $ J^1 $ is not projective. Thus, $ \rm{dom.dim}\ R\CQ \ngeq 1 $. Therefore $ \rm{dom.dim}\ R\CQ = 1 $.
\end{proof}

Observe that one of the advantage of working in the the category of representations, with value in the category of $R$-modules when $R$ is an arbitrary ring (not only field) is to study the category of representations of a quiver $\CQ$ with relations over a field. The following example is devoted to this and shows the power of Theorem \ref{Main III}.

Let us first recall briefly the notion of quivers with relations. Let $\CQ$ be an arbitrary quiver. A relation in $\CQ$ with coefficients in $R$ is an $R$-linear combination of paths of length at least two having the same source and target.
A relation usually is denoted by $ \rho = \sum_{i=1}^{m} r_i\gamma_i $, where $ r_i \in R $ and $\gamma_i$ are paths of $ \CQ $ of length at least $ 2 $ having a common source and a common target. 

If $ I $ is a set of relations for a quiver $ \CQ $ such that the ideal it generates $ \langle I \rangle$ contains all paths
of length at least $ m $ for some positive integer $ m \geq 2 $, then $ \langle I \rangle $ is called admissible. 

Let $ I $ be a set of relations such that the ideal $ \langle I \rangle$ is an admissible ideal. Then $ \Rep (\CQ_I, R) $ denotes the full subcategory of $ \Rep (\CQ, R) $ consisting of
the representations of $ \CQ $ bound by $ \langle I \rangle $; i.e.,
all representations $ M $ such that $ M_\rho= \sum_{i=1}^{m} r_i M_{\gamma_i} =0 $, for every relation $ \rho = \sum_{i=1}^{m} r_i\gamma_i $ in $ I $. Also, $ \Rep (\CQ_I, R) $ is equivalent to $ \RQIMod $. 

\begin{example}\label{5.6}
$ (i) $ Let $ K $ be a field and $\CQ$ be the quiver
\[\xymatrix{ & \cdot  \ar@(l,u)[]^{\alpha} \ar[r]^{\beta} & \cdot \ar@(r,u)[]_{\gamma} }\]
Set $I=\{ {\alpha}^2,{\gamma}^2, \beta\alpha-\gamma\beta\}$. Then the category $\Rep(\CQ_I, K)$ is equivalent to the category $\Rep(\CQ',R)$, where
$\CQ'$ is the quiver
\[\xymatrix{ \cdot \ar[r]& \cdot}\]
and $R=K[x]/(x^2)$.
Now, by our results, it is easily seen  that $R\CQ'$ is an Auslander ring and $ \rm{dom.dim}\ R\CQ'=1 $.

$ (ii) $ As another example, assume that $ R $ is the $ K $-algebra given by the quiver
\[\xymatrix{ & \cdot \ar@(l,u)[]^{\lambda_{3}} \ar[r]_{\beta} & \cdot \ar@(r,u)[]_{\lambda_{1}}  & \cdot \ar[l]^{\alpha}\ar@(r,u)[]_{\lambda_2} }\]
with relations ${ \lambda_{1}}^{2}, { \lambda_{2}}^{2}, { \lambda_{3}}^{2}, \alpha\lambda_2-\lambda_1\alpha, \beta\lambda_3-\lambda_1\beta $. Then the category $ \Rep (\CQ_I, K)$ is equivalent to the category $ \Rep (\CQ',R') $, where $ \CQ' $ is a quiver $\cdot \lrt \cdot \longleftarrow \cdot$ and $R'=K[x]/(x^2)$. Now, by our results, it is easily seen  that $R'\CQ'$ is not  $ 1 $-Gorenstein  and $ \rm{dom.dim}\ R'\CQ'=0 $. Also one can easily get a minimal injective resolution for $R'\CQ'$.
\end{example}

\subsection{Auslander-Gorenstein Property of Tensor Product of Path Algebras}\label{6}
This subsection is devoted to tensor product of path algebras. For quivers $ \CQ $ and $ \CQ' $ we define the tensor product quiver $\CQ\otimes \CQ'$ by $(\CQ \otimes \CQ')_0= \CQ_0 \times \CQ'_0$ and $(\CQ \otimes \CQ')_1= (\CQ_0 \times \CQ'_1) \cup (\CQ_1 \times \CQ'_0)$, where the maps $ t,\ s: (\CQ \otimes \CQ')_1 \lrt (\CQ \otimes \CQ')_0 $ are defined by $ t(\alpha, \ b)= (t(\alpha), \ b), \ t( a, \ \beta)= (a , \  t(\beta)),\ s(\alpha, \ b)= (s(\alpha),\ b ),$ and $ s(a, \ \beta)=(a, \ s(\beta)) $ for all $ a \in \CQ_0, \ b \in \CQ'_0, \ \alpha \in \CQ_1,$ and $ \beta \in \CQ'_1$.

Now let $ K $ be a field and $ I $ (resp. $ I' $) be a set of relations in $ \CQ $ (resp. $ \CQ' $). It is proved in \cite[Lemma 1.3]{Les} that there is a $K$-algebra isomorphism
$ K\CQ/ \CI  \otimes_K K\CQ'/ \CI'  \cong K(\CQ \otimes \CQ') / \CI \Box \CI',$
where $ \CI = \langle I \rangle$,  $\CI' = \langle I' \rangle  $, and
$\CI \Box \CI'$ is an ideal of $K(\CQ \otimes \CQ')$ generated by the set $ I \Box I' $ consisting of $(\CQ_0 \times I') \cup (I \times \CQ'_0)$ and all differences
$(\alpha,w') \circ (v, \beta) - (w, \beta) \circ (\alpha, v')$
in which $\alpha:v\rt w$ and $\beta:v'\rt w'$ are arrows in $\CQ_1$ and $\CQ'_1$, respectively.
Hence, there is an  $K$-algebra isomorphism $K(\CQ \otimes \CQ')/\CI \Box \CI' \cong K(\CQ' \otimes \CQ)/\CI'\Box \CI.$

By the above discussion we are able to prove a useful technical result.

\begin{stheorem}\label{tensor}
There exist the following equivalences of categories
\[\begin{array}{ll}
\Rep(\CQ \otimes \CQ'_{I\Box I'}, K) & \simeq \Rep (\CQ_I, K\CQ'/\CI')\\
& \simeq \Rep(\CQ'_{I'}, K\CQ/\CI).
\end{array}\]
\end{stheorem}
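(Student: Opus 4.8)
The plan is to reduce all three categories to module categories over suitable $K$-algebras, and then to show that these three algebras are isomorphic --- in fact all isomorphic to $K\CQ/\CI\otimes_K K\CQ'/\CI'$. The only inputs needed are the equivalence $\Rep(\CQ_I,R)\simeq (R\CQ/\langle I\rangle)\text{-}{\rm Mod}$ recalled just before Example \ref{5.6}, the isomorphism $K(\CQ\otimes\CQ')/\CI\Box\CI'\cong K\CQ/\CI\otimes_K K\CQ'/\CI'$ of \cite[Lemma 1.3]{Les}, and the elementary fact that for any $K$-algebra $R$ the path ring satisfies $R\CQ\cong R\otimes_K K\CQ$ as $K$-algebras. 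Since isomorphic rings have equivalent module categories, the three equivalences will follow at once.

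First I would treat the left-hand category. As its base ring is the field $K$, the recalled equivalence applied to the quiver $\CQ\otimes\CQ'$ bound by $I\Box I'$ gives $\Rep(\CQ\otimes\CQ'_{I\Box I'},K)\simeq (K(\CQ\otimes\CQ')/\CI\Box\CI')\text{-}{\rm Mod}$, using $\langle I\Box I'\rangle=\CI\Box\CI'$. Leszczyński's isomorphism then identifies this algebra with $K\CQ/\CI\otimes_K K\CQ'/\CI'$, so the left-hand category is equivalent to the module category over the tensor product algebra.

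Next I would treat the middle category, whose base ring is now $R:=K\CQ'/\CI'$. The same equivalence, applied to $\CQ$ bound by $I$ with coefficients in $R$, gives $\Rep(\CQ_I,K\CQ'/\CI')\simeq (R\CQ/\langle I\rangle)\text{-}{\rm Mod}$. Writing $R\CQ\cong R\otimes_K K\CQ$ and checking that under this identification the two-sided ideal generated by $I$ corresponds to $R\otimes_K\CI$, I obtain $R\CQ/\langle I\rangle\cong R\otimes_K(K\CQ/\CI)=K\CQ'/\CI'\otimes_K K\CQ/\CI\cong K\CQ/\CI\otimes_K K\CQ'/\CI'$, the very same algebra. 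Interchanging the roles of $(\CQ,I)$ and $(\CQ',I')$ and invoking the symmetry $\CI\Box\CI'\leftrightarrow\CI'\Box\CI$ already noted above yields the corresponding statement for the right-hand category. All three categories are thus module categories over isomorphic algebras, and the asserted equivalences follow.

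The step I expect to be the main obstacle is the identification of ideals in the middle case: one must verify both that $R\CQ\cong R\otimes_K K\CQ$ respects the algebra structure --- paying attention to how the coefficients from the (possibly noncommutative) ring $R$ interact with the vertex idempotents and paths of $\CQ$ --- and that the two-sided ideal $\langle I\rangle$ generated inside $R\CQ$ by the $K$-relations $I$ coincides with $R\otimes_K\CI$. The latter is where the hypothesis that $K$ is a field enters: $R$ is then free, hence flat, over $K$, so $R\otimes_K(-)$ is exact and the quotient by $\CI$ passes cleanly through the tensor product, giving $R\otimes_K(K\CQ/\CI)\cong (R\otimes_K K\CQ)/(R\otimes_K\CI)$. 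Everything else is a routine chain of natural isomorphisms.
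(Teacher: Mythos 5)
Your proposal is correct, but it takes a genuinely different route from the paper's proof. The paper argues directly at the level of representations: it reads the tensor quiver as a copy of $\CQ'$ placed at each vertex of $\CQ$, and builds explicit quasi-inverse functors $F$ and $G$ by letting $F(\CM)_v$ be the bound representation of $\CQ'$ sitting over the vertex $v$ (hence a $K\CQ'/\CI'$-module) and letting the arrows of $\CQ$ act via the arrows from $\CQ_1\times\CQ'_0$, which are $K\CQ'/\CI'$-linear precisely because of the commutativity relations in $I\Box I'$; the second equivalence then comes from the symmetry $\CI\Box\CI'\leftrightarrow\CI'\Box\CI$. You instead pass to module categories via $\Rep(\CQ_I,R)\simeq R\CQ/\langle I\rangle\text{-}{\rm Mod}$ and show all three algebras are isomorphic to $K\CQ/\CI\otimes_K K\CQ'/\CI'$, using \cite[Lemma 1.3]{Les} for the first and the identification $R\CQ/\langle I\rangle\cong R\otimes_K(K\CQ/\CI)$ with $R=K\CQ'/\CI'$ for the other two; your analysis of the ideal identification, including where flatness of $R$ over the field $K$ enters, is exactly the point that needs checking and is handled correctly. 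The trade-off: your argument is shorter and more formal but outsources the combinatorial content to Leszczy\'{n}ski's isomorphism, whereas the paper's explicit functors amount to an independent categorical re-proof of that isomorphism and make visible the picture (a representation of $\CQ$ by representations of $\CQ'$) that the paper exploits immediately afterwards to compute injective resolutions of $A\otimes_K B$. One minor point: routing through the module category of the bound tensor quiver uses the representation--module dictionary for $(\CQ\otimes\CQ', I\Box I')$, so you should observe that $\CI\Box\CI'$ is again admissible when $\CI$ and $\CI'$ are; the paper's direct construction sidesteps this entirely.
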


\begin{proof}
We prove the first equivalence, the second one follows in view of the remark above.
To this end, we define two functors $F: \Rep(\CQ \otimes \CQ'_{I\Box I'}, K) \lrt \Rep (\CQ_I, K\CQ'/\CI')$ and $G: \Rep (\CQ_I, K\CQ'/\CI') \lrt \Rep(\CQ \otimes \CQ'_{I\Box I'}, K)$ that are quasi-inverse.
First let us interpret the tensor product quiver $\CQ \otimes \CQ'$ with a set of relations $I\Box I'$. Replace each vertex of $\CQ$ by a copy of quiver $\CQ'$. Then, for the set of arrows, add arrows corresponding to the set $(\CQ_1 \times \CQ'_0)$. In addition, the set of relations $I \Box I'$ consists of the set of relations $I$, $I'$ and the commutativity relations that induced from addition arrows corresponding to $(\CQ_1 \times \CQ'_0)$.

Now, the functor $F:  \Rep(\CQ \otimes \CQ'_{I\Box I'}, K) \lrt \Rep (\CQ_I, K\CQ'/\CI')$ is defined as follows. Given a representation $\CM \in  \Rep(\CQ \otimes \CQ'_{I\Box I'}, K)$, $F(\CM)_v$ is a representation corresponding to a copy of quiver $\CQ'$ in a vertex $v$. Moreover, for every arrow $\alpha: v\lrt w$ of $\CQ$, $F(\alpha): F(\CM)_v \lrt F(\CM)_w$ is a morphism corresponding to the set $(\CQ_1 \times \CQ'_0)$. Note that the commutativity relations that is introduced above, implies that $F(\alpha)$ is a morphism of $K\CQ'/\CI'$-modules. Indeed, for every arrow we have a natural transformation between bound quiver representations of $ \CQ' $ by vector spaces.
The same method as above can be applied to define a functor
$ G: \Rep (\CQ_I, K \CQ'/\CI') \lrt \Rep(\CQ \otimes \CQ'_{I\Box I'}, K) $ that will be a  quasi-inverse of $F$.
\end{proof}

Let $ A=K\CQ $ and $ B= K\CQ' $ be path algebras with respect to finite, connected and acyclic quivers $\CQ$ and $ \CQ'$, where $ K $ is a field.  By Theorem \ref{tensor} and \cite[Lemma 1.3]{Les}, we can easily see that $ A\otimes_{K} B \cong (K\CQ)[\CQ']$. Thus one can compute easily
minimal injective resolution of the tensor product of path algebras $ A\otimes_{K} B$. Moreover, by Theorem \ref{Main III},
we have the following corollary.

\begin{scorollary}\label{tensorprod}
Let $ k $ be a positive integer and $ A=K\CQ $ and $ B= K\CQ' $ be path algebras of finite, connected and acyclic quivers $\CQ$ and $ \CQ'$, where $ K $ is a field.  Then
$ A\otimes_{K} B$ is $ k $-Gorenstein if and only if $ \CQ $ and $ \CQ' $ are linear quivers.
\end{scorollary}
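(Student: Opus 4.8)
The plan is to reduce the entire statement to a double application of Theorem~\ref{Main III}, exploiting the ring isomorphism $A\otimes_K B\cong (K\CQ)[\CQ']$ recorded just above (coming from Theorem~\ref{tensor} together with \cite[Lemma 1.3]{Les}). This isomorphism identifies $A\otimes_K B$ with the path algebra $R\CQ'$ of the finite, connected, acyclic quiver $\CQ'$ over the coefficient ring $R:=K\CQ$. Since $\CQ'$ satisfies the hypotheses of Theorem~\ref{Main III} and $R=K\CQ$ is a finite-dimensional, hence two-sided Noetherian, $K$-algebra, Theorem~\ref{Main III} applies to $R\CQ'$ verbatim, and the whole argument becomes purely formal.

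For the ``if'' direction, I would assume $\CQ=\overrightarrow{A_n}$ and $\CQ'=\overrightarrow{A_{n'}}$ are linear. First I would note that the base field $K$ is self-injective and therefore $k$-Gorenstein for every $k$; applying Theorem~\ref{Main III} to $K\CQ$ (with base ring $K$) and using $\CQ=\overrightarrow{A_n}$, I conclude that $R=K\CQ$ is $k$-Gorenstein. A second application of Theorem~\ref{Main III}, now to $R\CQ'\cong A\otimes_K B$ with $\CQ'=\overrightarrow{A_{n'}}$ and with the $k$-Gorenstein coefficient ring $R$, then yields that $A\otimes_K B$ is $k$-Gorenstein.

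For the ``only if'' direction I would run the same two applications in the opposite order. Assuming $A\otimes_K B\cong R\CQ'$ is $k$-Gorenstein, Theorem~\ref{Main III} forces simultaneously that $\CQ'=\overrightarrow{A_{n'}}$ (so $\CQ'$ is linear) and that $R=K\CQ$ is itself $k$-Gorenstein. Feeding this last fact into Theorem~\ref{Main III} once more, applied to $K\CQ$ over the field $K$, gives $\CQ=\overrightarrow{A_n}$, so $\CQ$ is linear as well.

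There is essentially no genuine obstacle once Theorem~\ref{Main III} and the isomorphism $A\otimes_K B\cong(K\CQ)[\CQ']$ are in hand. The only points requiring care are bookkeeping ones: verifying that $K\CQ$ legitimately serves as a coefficient ring for Theorem~\ref{Main III} (finite-dimensionality over $K$ guarantees the two-sided Noetherian hypothesis), and recording explicitly that a field, being self-injective, is $k$-Gorenstein, so that the inner application of the theorem imposes no residual condition on $K$. By the symmetric isomorphism $A\otimes_K B\cong(K\CQ')[\CQ]$ one may interchange the roles of $\CQ$ and $\CQ'$, which serves as a consistency check but is not needed for the proof.
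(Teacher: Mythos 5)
Your proposal is correct and follows essentially the same route as the paper: the paper likewise derives the corollary from the isomorphism $A\otimes_K B\cong (K\CQ)[\CQ']$ (via Theorem \ref{tensor} and \cite[Lemma 1.3]{Les}) combined with Theorem \ref{Main III}. Your write-up simply makes explicit the two applications of Theorem \ref{Main III} and the bookkeeping (Noetherianity of $K\CQ$, a field being $k$-Gorenstein) that the paper leaves to the reader.
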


\section*{Acknowledgments}
The authors would like to thank the referee for useful comments and hints that improved our exposition. The authors also thank the Center of Excellence for Mathematics (University of Isfahan).

\end{document}